\definecolor{violet}{rgb}{0.0,0.2,0.7}
\definecolor{rouge2}{rgb}{0.8,0.0,0.2}
\theoremstyle{plain}
    \newtheorem{thm}{Theorem}[section]
	\newtheorem{lem}[thm]{Lemma}
	\newtheorem{prop}[thm]{Proposition}
	\newtheorem{cor}[thm]{Corollary}
\theoremstyle{plain}
	\newtheorem{bigthm}{Theorem}
	\newtheorem{bigprop}[bigthm]{Proposition}
	\newtheorem{bigcor}[bigthm]{Corollary}
    \newtheorem*{bigrmk*}{Remark}
\theoremstyle{definition}
	\newtheorem{defn}[thm]{Definition}
	\newtheorem{eg}[thm]{Example}
	\newtheorem*{claim*}{Claim}
	\newtheorem*{ack*}{Acknowledgements}
\theoremstyle{remark}
	\newtheorem{rmk}[thm]{Remark}
	\newtheorem*{rmk*}{Remark}
        \newtheorem{ques}[thm]{Question}
	\newtheorem*{ques*}{Question}
	\newtheorem*{ans*}{Answer}
\numberwithin{equation}{section}
\newlist{steps}{enumerate}{1}
\setlist[steps, 1]{label = Step \arabic*:}
\DeclareMathSymbol{\lsb@l}{\mathalpha}{letters}{`l}
\DeclareFontFamily{U}{MnSymbolC}{}
\DeclareSymbolFont{MnSyC}{U}{MnSymbolC}{m}{n}
\DeclareFontShape{U}{MnSymbolC}{m}{n}{
	<-6>  MnSymbolC5
	<6-7>  MnSymbolC6
	<7-8>  MnSymbolC7
	<8-9>  MnSymbolC8
	<9-10> MnSymbolC9
	<10-12> MnSymbolC10
	<12->   MnSymbolC12}{}
\DeclareMathSymbol{\intprod}{\mathbin}{MnSyC}{'270}
\DeclareMathOperator{\supp}{supp}
\DeclareMathOperator{\length}{length}
\DeclareMathOperator{\mult}{mult}
\DeclareMathOperator{\ord}{ord}
\def\1{\mathds{1}}
\def\e{\mathrm{e}}
\def\RA{\mathrm{A}}
\def\RD{\mathrm{D}}
\def\RE{\mathrm{E}}
\newcommand{\ii}{\mathrm{i}}
\newcommand{\loc}{\mathrm{loc}}
\newcommand{\fa}{\mathfrak{a}}
\newcommand{\fm}{\mathfrak{m}}
\newcommand{\wX}{{\widetilde{X}}}
\newcommand\vep{\varepsilon}
\newcommand\vph{\varphi}
\newcommand\om{\omega}
\newcommand\ta{\theta}
\newcommand\gm{\gamma}
\newcommand\RGL{\mathrm{GL}}
\newcommand\RSL{\mathrm{SL}}
\newcommand\BN{\mathbb{N}}
\newcommand\BZ{\mathbb{Z}}
\newcommand\BR{\mathbb{R}}
\newcommand\BC{\mathbb{C}}
\newcommand\BB{\mathbb{B}}
\newcommand\BS{\mathbb{S}}
\newcommand\BD{\mathbb{D}}
\newcommand\BP{\mathbb{P}}
\newcommand\CC{\mathcal{C}}
\newcommand\CI{\mathcal{I}}
\newcommand\CO{\mathcal{O}}
\newcommand\Fm{\mathfrak{m}}
\newcommand\lt{\left}
\newcommand\rt{\right}
\newcommand\pl{\partial}
\newcommand\db{\bar{\partial}}
\newcommand\ddb{\partial \bar{\partial}}
\newcommand\dd{\mathrm{d}}
\newcommand\dc{\mathrm{d}^{\mathrm{c}}}
\newcommand\ddc{\mathrm{d}\mathrm{d}^{\mathrm{c}}}
\newcommand\w{\wedge}
\newcommand\reg{\mathrm{reg}}
\newcommand\sing{\mathrm{sing}}
\newcommand\set[2]{\left\{ {#1} \; \middle\vert \; {#2} \right\}}
\newcommand{\RN}[1]{\textup{\uppercase\expandafter{\romannumeral#1}}}
\newsavebox{\@brx}
\newcommand{\llangle}[1][]{\savebox{\@brx}{\(\m@th{#1\langle}\)}%
  \mathopen{\copy\@brx\kern-0.5\wd\@brx\usebox{\@brx}}}
\newcommand{\rrangle}[1][]{\savebox{\@brx}{\(\m@th{#1\rangle}\)}%
  \mathclose{\copy\@brx\kern-0.5\wd\@brx\usebox{\@brx}}}
\title{Demailly--Lelong numbers on complex spaces} 
\author{Chung-Ming Pan}
\address[Chung-Ming Pan]{Simons Laufer Mathematical Sciences Institute; 17 Gauss Way, Berkeley, CA 94720-5070, USA \qquad\qquad\qquad\qquad\qquad\qquad\qquad\qquad\qquad\qquad}
\email{\href{mailto:bandan770@gmail.com}{bandan770@gmail.com} \qquad\qquad\qquad\qquad\qquad\qquad\qquad\qquad\qquad\qquad\qquad\qquad\qquad\qquad}
\urladdr{\href{https://chungmingpan.github.io/}{https://chungmingpan.github.io/}}
\date{\today}
\subjclass{Primary: 32U25, 32U05, 32C15, 32S05; Secondary: 14E15, 14L30}
\keywords{Lelong number, Plurisubharmonic functions, Singular complex spaces, ADE singularities, Quotient singularities}
\begin{document} 

\maketitle

\begin{abstract}
We prove a conjecture proposed by Berman--Boucksom--Eyssidieux--Guedj--Zeriahi, affirming that the Demailly--Lelong number can be determined through a combination of intersection numbers given by the divisorial part of the potential and the SNC divisors over a log resolution of the maximal ideal of a given point. 
Moreover, this result establishes a pointwise comparison of two different notions of Lelong numbers of plurisubharmonic functions defined on singular complex spaces. 
We also provide an estimate for quotient singularities and sharp estimates for two-dimensional ADE singularities.
\end{abstract}

\section*{Introduction}
The Lelong number, introduced by Pierre Lelong \cite{Lelong_1957, Lelong_1968}, is a fundamental invariant in complex analysis and geometry (see \cite{Siu_1974, Demailly_1993, Demailly_agbook, GZbook} and the references therein). 
In complex geometry, singularities are ubiquitous across various domains within complex geometry, notably in Minimal Model Programs and compactifying moduli spaces. 
An important avenue of research involves exploring how fundamental objects and properties can be extended to singular complex spaces. 
In this note, our focus centers on the comparison of different notions of Lelong numbers for plurisubharmonic functions defined on singular complex spaces. 

\smallskip
In the complex Euclidean space $\BC^N$, a plurisubharmonic function is an upper semi-continuous function which is subharmonic along every complex line. 
On singular complex spaces, a plurisubharmonic function is a function that extends to a plurisubharmonic function defined near any local embedding into $\BC^N$.
For an $n$-dimensional 
locally irreducible reduced 
complex analytic space $X$, let $x \in X$ be a fixed point, and consider local generators $(f_i)_{i \in I}$ of the maximal ideal $\mathfrak{m}_{X,x}$ of $\CO_{X,x}$.
Set $\psi_x := \sqrt{\sum_{i \in I} |f_i|^2}$.
Let $\vph$ be a plurisubharmonic function defined near $x$.
From \cite[p. 45]{Demailly_1985}, the \emph{slope} of $\vph$ at $x$ is defined by
\[
    s(\vph,x) := \sup \set{\gm \geq 0}{\vph \leq \gm \log \psi_x + O(1)}.
\]
In \cite[D\'ef.~3]{Demailly_1982}, Demailly introduced another way of measuring the singularity of $\vph$ at $x$ by considering
\[
    \nu(\vph,x) := \lim_{r \to 0} {}^\downarrow \int_{\{\psi_x < r\}} (\ddc \vph) \w \lt(\ddc \log \psi_x \rt)^{n-1}.
\] 
We call it the \emph{Demailly--Lelong number} of $\vph$ at $x$. 
Notably, both of the slope and the Demailly--Lelong number are independent of the choice of $(f_i)_i$.

\smallskip
When $x$ is a smooth point in $X$, those two quantities are equal (see e.g. \cite[Thm.~2.32]{GZbook}). 
However, it is no longer the case when $(X,x)$ is singular. 
In such cases, one has the following inequality (cf. \cite[Rmk.~A.5]{BBEGZ_2019}) 
\begin{equation}\label{eq:slope<DL}
    \mult(X,x) \cdot s(\vph,x) \leq \nu(\vph,x),
\end{equation}
where $\mult(X,x) = \max\{k \in \BN \,\,|\,\, \mathscr{I}_{X,x} \subset \Fm_{\BC^N,x}^k\}$ is the multiplicity of $X$ at $x$, and $\mathscr{I}_X$ is the ideal sheaf of $X$ under a local embedding near $x$ into $\BC^N$.
The inequality \eqref{eq:slope<DL} is strict in general (cf. Proposition~\ref{bigprop:prop_C}).

\smallskip
Let $\pi': X' \to X$ be a normalization and let $\pi'': (\wX, \CO_\wX(-E)) \to (X', (\pi')^{-1}\Fm_{X,x})$\footnote{In the sequel, for any morphism $f: Y \to X$ and any ideal sheaf $\fa \subset \CO_X$, we shall consistently use the abuse of notation $f^{-1} \fa$ to represent the ideal sheaf $f^{-1} \fa \cdot \CO_Y$ of $\CO_Y$.} be a log-resolution of the ideal sheaf $(\pi')^{-1}\Fm_{X,x}$; namely, $(\pi'')^{-1} \lt((\pi')^{-1} \Fm_{X,x}\rt) = \CO_\wX(-E)$. 
Set $\pi := \pi' \circ \pi''$. 
A conjecture stated in {\it loc. cit.} proposes that the Demailly--Lelong number can be read as an intersection number $(D \cdot (-E)^{n-1})$, where $D$ is the divisorial part of $\ddc \vph \circ \pi$ over $\pi^{-1}(x)$ under (partial) Siu's decomposition. 
This would ensure the existence of a constant $C_x \geq 1$, independent of $\vph$, such that the following inequality holds:
\begin{equation}\label{eq:conj_comparison_Lelong_num}
    \nu(\vph,x) \leq C_x \cdot \mult(X,x) \cdot s(\vph,x).
\end{equation}

In this note, we confirm the aforementioned conjecture of Berman--Boucksom--Eyssidieux--Guedj--Zeriahi in \cite[Rmk.~A5]{BBEGZ_2019}. 
Precisely, we get
\begin{bigthm}\label{bigthm:intersection_and_comparison}
Let $(X,x)$ be a germ of $n$-dimensional 
locally irreducible reduced 
complex analytic space.
As above, take $\pi: (\wX, \CO_{\wX}(-E)) \to (X, \Fm_{X,x})$ a log-resolution of $\Fm_{X,x}$ where $E$ is effective and so that $\pi^{-1}\fm_{X,x} = \CO_\wX(-E)$\footnote{Note that $E$ may not represent the entire exceptional divisor of $\pi$.}.
Let $(E_i)_{i \in I}$ be the irreducible components of $E$. 
Then given any germ of plurisubharmonic functions $\vph: (X,x) \to \BR \cup \{-\infty\}$, the Demailly--Lelong number can be expressed as 
\[
    \nu(\vph,x) = (D \cdot (-E)^{n-1})
\]
where $D = \sum_i a_i E_i$ so that $\ddc \vph \circ \pi = \sum_{i \in I} a_i [E_i] + R$ with $\nu(R,E_i) = 0$ for all $i \in I$.
In particular, there is a constant $C_x \geq 1$ such that
\[
    \nu(\vph, x) \leq C_x \cdot \mult(X,x) \cdot s(\vph,x).
\]
for all germs of plurisubharmonic functions $\vph: (X,x) \to \BR \cup \{-\infty\}$.
\end{bigthm}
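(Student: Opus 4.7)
The plan is to pull back the Monge--Amp\`ere integral defining $\nu(\vph,x)$ to the log-resolution $\wX$ and to express it as an intersection number by applying a Siu-type decomposition to $\pi^*\ddc\vph$. Because $\pi$ is a biholomorphism off the pluripolar set $\pi^{-1}(x)$, the birational invariance of Monge--Amp\`ere operators for psh functions with analytic singularities yields
\[
\nu(\vph,x) = \lim_{r \to 0} \int_{\pi^{-1}\{\psi_x < r\}} \pi^*(\ddc\vph) \wedge T^{n-1},
\]
where $T := \pi^*\ddc\log\psi_x$. The hypothesis $\pi^{-1}\fm_x = \CO_\wX(-E)$ forces, in SNC charts for $E$, $\psi_x \circ \pi = |s|\,e^u$ with $s$ a monomial defining $E$ and $u$ smooth; hence $T = [E] + \gm$ for a smooth closed $(1,1)$-form $\gm$ cohomologous to $c_1(\CO_\wX(-E))$ near $E$, and $T^{n-1}$ is a well-defined closed positive current by Bedford--Taylor.

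Plugging the Siu decomposition $\pi^*\ddc\vph = \sum_i a_i [E_i] + R$ into the integral splits it as $\sum_i a_i \int [E_i] \wedge T^{n-1} + \int R \wedge T^{n-1}$. I would then show two things. First, for each $i$, the limit $\lim_{r\to 0} \int [E_i] \wedge T^{n-1}$ equals $(E_i \cdot (-E)^{n-1})$: heuristically, the singular part of $T$ along $E_i$ is cancelled by the normal-bundle self-intersection $[E_i]|_{E_i} \sim c_1(N_{E_i/\wX})$, so that the effective restriction of $T$ to $E_i$ is a smooth representative of $c_1(-E|_{E_i})$, whose $(n-1)$-th power integrates to the Chern number on the compact variety $E_i$. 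Second, the residual $\int R \wedge T^{n-1}$ must vanish in the limit: since $R$ has zero generic Lelong number along every $E_i$ while $T^{n-1}$ has analytic singularities concentrated on $E$, a Chern--Levine--Nirenberg / Demailly-type estimate should confine the $R$-mass of $T^{n-1}$ to $E$ and thus to a set of $T^{n-1}$-measure zero. Combining these yields the intersection formula $\nu(\vph,x) = D \cdot (-E)^{n-1}$.

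For the comparison inequality, the slope characterization $\vph \leq \gm \log\psi_x + O(1)$ pulls back to $\pi^*\vph \leq \gm \log|s_E|_h + O(1)$, forcing $a_i \geq s(\vph,x)\, e_i$ for every component $E_i$. Hence $D - s(\vph,x) E$ is an effective $\BR$-divisor supported on $E$, and using the Teissier/Hilbert--Samuel identity $E \cdot (-E)^{n-1} = \mult(X,x)$ (valid on any log-resolution of $\fm_x$), one obtains
\[
\nu(\vph,x) = s(\vph,x)\,\mult(X,x) + (D - s(\vph,x) E) \cdot (-E)^{n-1};
\]
the uniform constant $C_x$ then arises from bounding the remainder by a combinatorial argument on the dual graph of $E$ exploiting the $\pi$-nefness of $-E$. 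The main obstacle I anticipate is making the intersection computation rigorous against the singular part of $T$: arguing that the Bedford--Taylor products $[E_i]\wedge T^{n-1}$ really do capture the expected Chern pairing, despite $T$ itself being divisorial along $E_i$, requires a delicate currents-theoretic argument (via Siu's density or Demailly's regularization), which also underlies the residual vanishing step for arbitrary psh $\vph$ with possibly non-analytic singularities.
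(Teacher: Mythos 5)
Your overall architecture---pull back to $\wX$, apply Siu's decomposition to $\ddc\pi^*\vph$, identify the limit with $D\cdot(-E)^{n-1}$, then control the comparison constant by a chain argument on the dual graph---is the same as the paper's. But the central analytic step, evaluating $\lim_{r\to 0}\int [E_i]\w T^{n-1}$ and $\lim_{r\to 0}\int R\w T^{n-1}$ with $T=\pi^*\ddc\log\psi_x=[E]+\gm$, has a genuine gap which you flag but do not close. The product $T^{n-1}$ is \emph{not} defined by Bedford--Taylor upstairs: $\log\psi_x\circ\pi$ equals $-\infty$ on the divisor $E$, so its unbounded locus has codimension $1$, and the codimension criterion that makes $(\ddc\log\psi_x)^{n-1}\w\ddc\vph$ well defined \emph{downstairs} (where $\{\psi_x=0\}=\{x\}$ has codimension $n$) fails after pulling back. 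A fortiori the pairings $[E_i]\w T^{n-1}$ and $R\w T^{n-1}$, in which both factors are singular along the same divisor, are ill-defined, and the ``normal-bundle cancellation'' you invoke for the Chern pairing is a heuristic rather than an argument. Note also that $\vph$ is an arbitrary psh germ, so you cannot appeal to birational invariance of Monge--Amp\`ere operators for functions with analytic singularities.

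The paper closes exactly this gap with one elementary observation you are missing: approximate $\vph$ by smooth psh $\vph_j\downarrow\vph$ (Forn{\ae}ss--Narasimhan) and $\log\psi_x$ by $\log(\psi_x+\vep_k)$, pull back, and let the $\vep_k\to 0$ one at a time. Each limit produces a factor $\sum_i m_i[E_i]+\ta$ with $\ta$ smooth semi-positive, and the $[E_i]$-terms pair to \emph{zero} against $\ddc\pi^*\vph_j$ because $\pi^*\vph_j$ is constant on each component $E_i\subset\pi^{-1}(x)$. Thus only the smooth form $\ta^{n-1}$ survives, the limit becomes $\int\lt(\sum_i a_i[E_i]+R\rt)\w\ta^{n-1}$, and the residual term charges no mass to $E$ since $\1_{E_i}R=\nu(R,E_i)[E_i]=0$ by Siu; this is what makes $\nu(\vph,x)=\sum_i a_i\int_{E_i}\ta^{n-1}=D\cdot(-E)^{n-1}$ rigorous. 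One further correction on the comparison inequality: bounding $(D-s(\vph,x)E)\cdot(-E)^{n-1}$ by a multiple of $s(\vph,x)\cdot\mult(X,x)$ amounts to the Izumi-type estimate $\max_i(a_i/m_i)\leq C\min_i(a_i/m_i)$ of Lemma~\ref{lem:BBEGZ_A4}, whose input is the psefness of $-D_{|E_i}$ (derived in Lemma~\ref{lem:psef} from Bott--Chern triviality of $\ddc\pi^*\vph$ and Demailly regularization of $R$) together with connectedness of $\pi^{-1}(x)$; the $\pi$-nefness of $-E$ alone says nothing about the coefficients $a_i$ of an arbitrary $\vph$ and would not suffice.
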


Combining \eqref{eq:slope<DL} and Theorem~\ref{bigthm:intersection_and_comparison}, if one of these Lelong numbers is zero, then so is another one:
\begin{bigcor}
Let $X$ be an $n$-dimensional 
locally irreducible reduced 
complex analytic space and let $x \in X$. 
Then given any germ of plurisubharmonic function $\vph: (X,x) \to \BR \cup \{-\infty\}$, we have
\[
    s(\vph,x) = 0 \iff \nu(\vph,x) = 0.
\]
\end{bigcor}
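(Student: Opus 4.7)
The corollary follows directly from combining the two inequalities that together form Theorem~A. The plan is to sandwich one quantity between constant multiples of the other. Concretely, the inequality \eqref{eq:slope<DL} recalled from \cite[Rmk.~A.5]{BBEGZ_2019} gives $\mult(X,x) \cdot s(\vph,x) \leq \nu(\vph,x)$, while the second assertion of Theorem~\ref{bigthm:intersection_and_comparison} provides a constant $C_x \geq 1$, depending only on $(X,x)$, with $\nu(\vph,x) \leq C_x \cdot \mult(X,x) \cdot s(\vph,x)$. Hence
\[
    \mult(X,x) \cdot s(\vph,x) \;\leq\; \nu(\vph,x) \;\leq\; C_x \cdot \mult(X,x) \cdot s(\vph,x).
\]
Since $\mult(X,x)$ is a positive integer and $C_x$ is a finite positive constant, the middle term vanishes if and only if both outer terms do, which is exactly the desired equivalence.

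The only things to check are that both quantities are nonnegative (so that vanishing is the natural extremal case), but this is built into the definitions: $s(\vph,x) \geq 0$ because $\gm = 0$ is always admissible in the defining supremum (any germ of plurisubharmonic function is locally bounded above), and $\nu(\vph,x) \geq 0$ because it is a decreasing limit of integrals of positive currents on the sublevel sets $\{\psi_x < r\}$. There is no further obstacle; the genuine content is in Theorem~\ref{bigthm:intersection_and_comparison} itself, and the corollary is purely a matter of packaging the two-sided bound into the qualitative statement, which has the advantage of being independent of the unknown constant $C_x$.
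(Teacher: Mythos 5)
Your argument is correct and is exactly the intended one: the paper presents this corollary as an immediate consequence of sandwiching $\nu(\vph,x)$ between $\mult(X,x)\cdot s(\vph,x)$ (inequality \eqref{eq:slope<DL}) and $C_x\cdot\mult(X,x)\cdot s(\vph,x)$ (Theorem~\ref{bigthm:intersection_and_comparison}). Your additional check that both quantities are nonnegative is a reasonable bit of care, and nothing further is needed.
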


It is natural to wonder whether a uniform version of the comparison still holds; namely, a comparison independent of both the base point $x$ and the plurisubharmonic function $\vph$ (see Question~\ref{ques:general_comparison}).

\smallskip
The two-dimensional ADE singularities can be considered as the simplest singularities in complex and algebraic geometry.
They arise as an orbifold fixed point locally modeled by $\BC^2/ G$, where $G$ is a finite subgroup of $\RSL(2,\BC)$.
In order to further understand the behavior of $C_x$, we compute sharp estimates for two-dimensional ADE singularities: 
\begin{eqnarray*}
    \RA_k: & \quad (x^2 + y^2 + z^{k+1} = 0) \subset \BC^3 & \quad (k \in \BN_{\geq 1}),\\
    \RD_k: & \quad (x^2 + y^2 z + z^{k-1} = 0) \subset \BC^3 & \quad (k \in \BN_{\geq 4}),\\
    \RE_6: & \quad (x^2 + y^3 + z^4 = 0) \subset \BC^3, & \\
    \RE_7: & \quad (x^2 + y^3 + y z^3= 0) \subset \BC^3, & \\
    \RE_8: & \quad (x^2 + y^3 + z^5 = 0) \subset \BC^3. &
\end{eqnarray*}

\begin{bigprop}\label{bigprop:prop_C}
Let $(X,x)$ be a two-dimensional ADE-singularity. 
Then the following hold:
\begin{enumerate}
    \item if $(X,x) \simeq (\RA_k, 0)$, then $\nu(\vph,x) \leq \frac{k+1}{2} \cdot \mult(X,x) \cdot s(\vph,x)$ and this estimate is sharp;
    \item if $(X,x) \simeq (\RD_k, 0)$, $(\RE_6, 0)$, $(\RE_7, 0)$ or $(\RE_8, 0)$, then $\nu(\vph,x) = \mult(X,x) \cdot s(\vph,x)$.
\end{enumerate}
\end{bigprop}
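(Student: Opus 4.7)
The plan is to combine Theorem~\ref{bigthm:intersection_and_comparison} with the explicit data of the minimal resolution and reduce the sharp estimate to a finite linear program on the Dynkin dual graph. Let $\pi : \wX \to X$ be the minimal resolution. Its exceptional components $(E_i)_{i\in I}$ are smooth rational $(-2)$-curves whose dual intersection graph is the Dynkin diagram of the corresponding ADE type, and since $(X,x)$ is a rational double point, $\pi$ is already a log resolution of $\Fm_x$. Artin's theory identifies $E$ with the fundamental cycle: its coefficients $(m_i)$ are the marks of the highest root of the corresponding root system. In particular $-E^2 = \mult(X,x) = 2$ and $c_i := -E_i \cdot E = (Am)_i$, where $A$ is the Cartan matrix ($A_{ii} = 2$, $A_{ij} = -1$ for adjacent $i \neq j$, $0$ otherwise). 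Concretely: for $\RA_k$ with $k \geq 2$, $m_i \equiv 1$, $c_1 = c_k = 1$ and other $c_i = 0$; for $\RA_1$, $c_1 = 2$; for $\RD_k, \RE_6, \RE_7, \RE_8$, there is a unique index $i_0$ with $c_{i_0} = 1$, all other $c_i = 0$, and $m_{i_0} = 2$.

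Writing Siu's decomposition $\ddc(\vph \circ \pi) = \sum_i a_i [E_i] + R$ with $R \geq 0$ and $\nu(R, E_i) = 0$, Theorem~\ref{bigthm:intersection_and_comparison} yields $\nu(\vph, x) = \sum_i c_i a_i$. Exactness of the $\ddc$-class of $\vph \circ \pi$ in $H^2(\wX, \BR)$ together with $R \cdot E_j \geq 0$ forces $D \cdot E_j \leq 0$, i.e.\ the superharmonicity $(Aa)_j \geq 0$ on the Dynkin graph. Comparing generic Lelong numbers along each $E_i$ with those of $\log \psi_x \circ \pi$ (which equal $m_i$ since $\pi^{-1}\Fm_x = \CO_{\wX}(-E)$) yields the identification $s(\vph, x) = \min_i a_i/m_i$; the direction $\leq$ is standard and $\geq$ follows by controlling $\vph \circ \pi$ through its divisorial model $\sum_i s \, m_i \log|s_i|$ as in the proof of Theorem~\ref{bigthm:intersection_and_comparison}. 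Normalising $s(\vph, x) = 1$, the problem reduces to maximising $\sum_i c_i a_i$ over the polyhedron $\{a : a_i \geq m_i,\ (Aa) \geq 0,\ \min_i(a_i/m_i) = 1\}$.

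The combinatorial heart of the argument is a propagation principle: if $(Am)_{j_0} = 0$ and $a_{j_0} = m_{j_0}$, then superharmonicity at $j_0$ combined with $2 m_{j_0} = \sum_{i \sim j_0} m_i$ (a restatement of $(Am)_{j_0} = 0$) and $a_i \geq m_i$ forces $a_i = m_i$ for every neighbour $i$ of $j_0$. For $\RA_k$ with $k \geq 2$, the defect $(Am)_j$ is nonzero only at the endpoints $j \in \{1, k\}$: propagation from an interior $j_0$ fixes $a_i \equiv 1$ and $\nu = 2$, while for $j_0 \in \{1, k\}$ the constraint $(Aa)_1 \geq 0$ gives $a_2 \leq 2 a_1 = 2$ and the iterated discrete concavity (with $a_0 := 0$) yields $a_j \leq j$, whence $\nu = a_1 + a_k \leq k+1$. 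The bound is saturated by $\vph = \log|x|$: the Hirzebruch--Jung resolution of $\BC^2/\BZ_{k+1}$ gives $v_i(x) = i$, so $a_i = i$ and $s(\log|x|, x) = 1$, producing $\nu(\log|x|, x) = k+1 = \tfrac{k+1}{2}\mult(X,x)\,s(\log|x|, x)$. For $\RD_k, \RE_6, \RE_7, \RE_8$, one has $(Am)_j = 0$ for every $j \neq i_0$: propagation from any $j_0$ spreads through the tree and reaches the unique neighbour $i'$ of $i_0$ on the path from $j_0$, and superharmonicity at $i'$ then forces $a_{i_0} \leq m_{i_0}$; combined with $a_{i_0} \geq m_{i_0}\, s(\vph, x)$ this yields the equality $\nu(\vph, x) = m_{i_0}\, s(\vph, x) = \mult(X, x)\, s(\vph, x)$ for every $\vph$.

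\textbf{Main obstacle.} The most delicate ingredient is the slope identification $s(\vph, x) = \min_i a_i/m_i$, specifically the inequality $s(\vph, x) \geq \min_i a_i/m_i$. For potentials of the form $\log|f|$ with $f$ a holomorphic germ this reduces to the elementary identity $\ord_x(f) = \min_i v_i(f)/m_i$, but the general case requires controlling $\vph \circ \pi$ through a Demailly-type regularisation of psh currents near the crossings of the exceptional divisor rather than merely at generic points of each $E_i$. The LP analysis itself is then a finite bookkeeping made uniform by the characterisation $Am = e_{i_0}$ of the highest root on the trees $\RD_k, \RE_6, \RE_7, \RE_8$, which makes the propagation succeed independently of which node realises the slope minimum.
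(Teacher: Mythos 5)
Your proposal is correct and follows essentially the same route as the paper: express $\nu(\vph,x) = -D\cdot E = \sum_i (Am)_i\,a_i$ via Theorem~\ref{bigthm:intersection_and_comparison}, combine the psef inequalities $(Aa)_j\geq 0$ with the slope identification $s(\vph,x)=\min_i a_i/m_i$, and solve the resulting finite linear program on each Dynkin dual graph. The differences are only organizational: your uniform ``propagation'' (discrete maximum principle) argument replaces the paper's case-by-case direct computation of $-D\cdot E$ and of the minimum for $\RD_k,\RE_6,\RE_7,\RE_8$, and your sharpness witness $\log|x|$ (read off the Hirzebruch--Jung resolution) plays exactly the role of the paper's $\log(|x|+|z|^{k+1})$ computed on the quotient chart.
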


Given that ADE singularities are a subset of quotient singularities, it is legitimate to extend our investigation to compare the slope and the Demailly--Lelong number on more general quotient singularities.
Precisely, we derive the following result: 

\begin{bigprop}\label{bigprop:quotient}
Suppose that $(X,x)$ is isomorphic to a quotient singularity $(\BC^n/G, \pi(0))$, where $G$ is a finite subgroup of $\RGL(n,\BC)$ and $\pi: \BC^n \to \BC^n/G$ is the quotient map.
Then for any germ for plurisubharmonic function $\vph: (X,x) \to \BR \cup \{-\infty\}$, 
\[
    \nu(\vph,x) \leq |G|^{n-1} \cdot s(\vph,x)
\]
where $|G|$ is the order of $G$.
\end{bigprop}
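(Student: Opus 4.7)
\smallskip
The plan is to pull back $\vph$ to $\BC^n$ via the quotient map $q \colon \BC^n \to \BC^n/G \simeq X$ and compare both Lelong-type invariants of $\vph$ at $x$ to the standard Lelong number of $\tilde\vph := \vph \circ q$ at $0 \in \BC^n$. The bridge between the two will be Demailly's Comparison Theorem for generalized Lelong numbers, combined with a pullback formula under the finite map $q$.

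\smallskip
\textbf{Step 1: A two-sided bound on $q^*\psi_x$.} By Noether's degree bound, the invariant ring $\BC[z_1, \dots, z_n]^G$ is generated as a $\BC$-algebra by homogeneous invariants of degree at most $|G|$, so I may pick generators $f_1, \dots, f_r$ of $\fm_{X,x}$ that are homogeneous $G$-invariant polynomials with $1 \leq d_i := \deg f_i \leq |G|$. Setting $\tilde\psi_x := \psi_x \circ q = \sqrt{\sum_i |f_i|^2}$, for $|z| \leq 1$, writing $r = |z|$ and $u = z/r$:
\[
\tilde\psi_x(z)^2 = \sum_i r^{2d_i} |f_i(u)|^2.
\]
The estimate $|f_i(z)| \leq C_i |z|^{d_i} \leq C_i |z|$ yields $\tilde\psi_x(z) \leq C|z|$. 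For the lower bound, $d_i \leq |G|$ and $r \leq 1$ give $r^{2 d_i} \geq r^{2|G|}$, while $u \mapsto \sum_i |f_i(u)|^2$ is strictly positive on $\{|u|=1\}$ (the common zero set of $(f_i)$ is $\{0\}$), so compactness gives $\tilde\psi_x(z) \geq c |z|^{|G|}$. Taking logs, $|G|\log|z| - O(1) \leq \log\tilde\psi_x \leq \log|z| + O(1)$ near $0$.

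\smallskip
\textbf{Step 2: Slope comparison and pullback of the Demailly--Lelong number.} If $\tilde\vph \leq \gm \log|z| + O(1)$, then by Step 1 $\tilde\vph \leq \tfrac{\gm}{|G|} \log\tilde\psi_x + O(1)$; since $\tilde\vph$ and $\tilde\psi_x$ are $G$-invariant this descends to $\vph \leq \tfrac{\gm}{|G|} \log\psi_x + O(1)$ on $X$, yielding $s(\tilde\vph,0) \leq |G| \cdot s(\vph,x)$. On the other hand, $q$ is a finite branched cover of degree $|G|$, étale over $X_\reg$, so change of variables for the closed positive measure $\ddc\vph \w (\ddc\log\psi_x)^{n-1}$ gives
\[
\nu(\vph, x) = \frac{1}{|G|}\, \nu\bigl(\tilde\vph, \log \tilde\psi_x\bigr),
\]
where the right-hand side is Demailly's generalized Lelong number of $\tilde\vph$ on $\BC^n$ with weight $\log\tilde\psi_x$ at $0$.

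\smallskip
\textbf{Step 3: Demailly's Comparison Theorem and conclusion.} Both $\log\tilde\psi_x$ and $\log|z|$ are plurisubharmonic weights on a neighborhood of $0 \in \BC^n$ with $-\infty$-locus reduced to $\{0\}$, and by Step 1 $\limsup_{z\to 0} \log\tilde\psi_x/\log|z| \leq |G|$. Applied to the closed positive current $\ddc\tilde\vph$ of bidimension $(n-1, n-1)$, Demailly's Comparison Theorem for Lelong numbers yields
\[
\nu\bigl(\tilde\vph, \log\tilde\psi_x\bigr) \leq |G|^{n-1}\, \nu(\tilde\vph, 0).
\]
Since $0 \in \BC^n$ is smooth, $\nu(\tilde\vph, 0) = s(\tilde\vph, 0)$, and combining with Step 2:
\[
\nu(\vph, x) \leq \tfrac{1}{|G|} \cdot |G|^{n-1} \cdot s(\tilde\vph, 0) \leq |G|^{n-1} s(\vph, x).
\]
The main technical point requiring care is the pullback identity for the generalized Lelong number across the branch locus of $q$: since $q$ is a ramified $|G|$-fold cover with analytic branch locus of measure zero and both sides define closed positive currents agreeing on $X_\reg$, the standard degree-$|G|$ change of variables for top-degree positive measures does the job, but this step should be written out carefully. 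The Noether bound and the Comparison Theorem itself are off-the-shelf.
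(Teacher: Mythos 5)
Your proposal is correct and follows essentially the same route as the paper: Noether's degree bound to get $|G|\log|z| \leq \log\tilde\psi_x + O(1)$, the degree-$|G|$ pullback identity for the Demailly--Lelong number on the quotient chart (which the paper justifies by the same smoothing/measure-zero-branch-locus argument you flag as the technical point), Demailly's comparison theorem to pass to the standard weight $\log|z|$ at the cost of $|G|^{n-1}$, and the slope descent $s(\tilde\vph,0) \leq |G|\, s(\vph,x)$. The bookkeeping of the powers of $|G|$ matches the paper's computation exactly.
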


\subsection*{Organization of the article}
\begin{itemize}
    \item In Section~\ref{sec:slope_vs_DL}, we review the basic concepts of plurisubharmonic functions on singular complex spaces. 
    Then we give a proof of Theorem~\ref{bigthm:intersection_and_comparison}.
    
    \item Section~\ref{sec:ADE} is devoted to investigating the comparison on explicit examples of two-dimensional ADE singularities. We obtain the estimates in Proposition~\ref{bigprop:prop_C}.
    
    \item In Section~\ref{sec:rmk}, we first study the comparison on quotient singularities (Proposition~\ref{bigprop:quotient}), and we validate the sharpness of the estimates provided in Proposition~\ref{bigprop:prop_C} for $\RA_k$-singularities through a computation on the quotient chart. 
    We then consider a vertex of a cone over a smooth hypersurface. 
    At the end, we address some questions on the uniform comparison and maximum ratio between the Demailly--Lelong number and the slope and a remark in connecting to algebraic quantities.  
\end{itemize}

\begin{ack*}
The author is grateful to V.~Guedj and H.~Guenancia for their constant support and suggestions and to S.~Boucksom, M.~Jonsson, and M.~P\u{a}un for their interest in this work and helpful feedback. 
The author is indebted to A.~Patel, A.~Trusiani, and D.-V.~Vu for discussions and remarks, and to Q.-T.~Dang for carefully reading a first draft. 
The author would also like to thank the anonymous referee for useful comments and for pointing out the original reference on the extension theorem for plurisubharmonic functions on complex spaces.
This work partially benefited from research projects Paraplui ANR-20-CE40-0019 and Karmapolis ANR-21-CE40-0010.
\end{ack*}

\section{Slope versus Demailly--Lelong number}\label{sec:slope_vs_DL}

\subsection{Preliminaries}
In this section, we recall a few notions of pluripotential theory on singular spaces.
We set the twisted exterior derivative $\dc = \frac{\ii}{2\pi}(\db -\pl)$ so that $\ddc = \frac{\ii}{\pi} \ddb$.

\smallskip
In the sequel, we shall always suppose that $X$ is an $n$-dimensional reduced complex analytic space.
We denote by $X^\reg$ the complex manifold of regular points of $X$. 
The set of singular points
\[
    X^\sing := X \setminus X ^\reg
\] 
is a complex analytic subset of $X$ of complex codimension greater than or equal to $1$.  
By definition, for each point $x \in X$, there exist an open neighborhood $U$ of $x$ and a local embedding $j: U \hookrightarrow \BC^N$ for some $N \in \BN^\ast$.
The notion of smooth functions/forms on $X$ can be defined by setting as functions/forms on $X^\reg$ which extend smoothly under any local embedding. 
The operators $\pl, \db, \dd, \dc,$ and $\ddc$ are well-defined by duality. 
We refer to \cite{Demailly_1985} for a detailed presentation of these concepts. 

\smallskip
Similarly, one also has analytic notions of plurisubharmonic (psh) functions:

\begin{defn}
Let $u: X \to \BR \cup \{-\infty\}$ be a given function that is not identically $-\infty$ on any open subset of $X$. 
The function $u$ is psh on $X$ if it is locally the restriction of a psh function on a local embedding $X \underset{\loc.}{\hookrightarrow} \BC^N$.
\end{defn}

Forn{\ae}ss--Narasimhan \cite[Thm.~5.3.1]{FN_1980} proved that being psh on $X$ is equivalent to the following: for any analytic disc $h: \BD \to X$, $u \circ h$ is either subharmonic on $\BD$ or identically $-\infty$.

\smallskip
If $u$ is a psh function on $X^\reg$ and locally bounded from above on $X$, one can extend $u$ to $X$ as follows:
\begin{equation}\label{eq:usc_psh}
    u^\ast(x) = \limsup_{X^\reg \ni y \to x} u(y).
\end{equation}
Grauert--Remmert's extension theorem \cite[Satz~3]{Grauert_Remmert_1956} shows that the above extension is also a psh function on $X$ (see also \cite[Thm.~1.7]{Demailly_1985}): 

\begin{thm}
Suppose that $X$ is locally irreducible. 
If $u$ is psh on $X^\reg$ and locally bounded from above on $X$, then the function $u^\ast$ defined by \eqref{eq:usc_psh} is psh on $X$.
\end{thm}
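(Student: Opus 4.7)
The plan is to verify plurisubharmonicity of the upper semi-continuous regularization $u^\ast$ by appealing to the Forn{\ae}ss--Narasimhan criterion recalled just above the statement: it is enough to check that $u^\ast$ is upper semi-continuous and that $u^\ast \circ h$ is either subharmonic or identically $-\infty$ on $\BD$ for every analytic disc $h: \BD \to X$. Upper semi-continuity is immediate from the definition of $u^\ast$ as a lim-sup regularization, and the local boundedness from above hypothesis (appearing just above the statement) ensures $u^\ast$ takes values in $\BR \cup \{-\infty\}$. Since $u$ is psh on the complex manifold $X^\reg$, it is already USC there, so $u^\ast = u$ pointwise on $X^\reg$.

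Next I would test the submean inequality along an arbitrary analytic disc $h: \BD \to X$, with a case split according to whether $h(\BD) \subset X^\sing$. In the generic case $h(\BD) \not\subset X^\sing$, the set $h^{-1}(X^\sing)$ is a proper analytic subset of $\BD$, hence discrete; on its complement $h$ takes values in $X^\reg$, so $u \circ h$ is subharmonic and locally bounded above near $h^{-1}(X^\sing)$, and the classical removable singularity theorem for subharmonic functions promotes this to subharmonicity of $u^\ast \circ h$ on all of $\BD$. In the degenerate case $h(\BD) \subset X^\sing$, I would lift the picture to a local resolution of singularities $\pi: \wX \to X$ with $\wX$ smooth. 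Local irreducibility ensures that, after possibly shrinking $\BD$, $h$ admits a nonconstant analytic lift $\wtd{h}: \BD \to \wX$ through $\pi$, since the absence of branching pins down a unique analytic preimage curve. On the complex manifold $\wX$, the function $(u \circ \pi)^\ast$ is psh by the classical theorem on extension of psh functions across thin analytic sets in smooth ambient spaces, hence $(u \circ \pi)^\ast \circ \wtd{h}$ is subharmonic on $\BD$. The identification $u^\ast \circ h = (u \circ \pi)^\ast \circ \wtd{h}$ then closes this case.

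The main obstacle I anticipate is precisely this last identification: showing that the USC regularization carried out on the smooth resolution $\wX$ descends to the USC regularization $u^\ast$ taken on $X$, in particular at singular points. This is where local irreducibility is indispensable; it guarantees that any analytic disc lying inside $X^\sing$ does not correspond to several distinct branches in $\wX$, so a canonical lift $\wtd{h}$ exists and the push-forward relation $u^\ast = \pi_\ast (u \circ \pi)^\ast$ holds without ambiguity. Everything else is essentially the standard smooth theory transferred through $\pi$, together with elementary removable-singularity facts for subharmonic functions in one complex variable.
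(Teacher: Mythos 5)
The paper does not prove this statement; it is quoted directly from Demailly's 1985 paper, so there is no internal proof to compare against. Judged on its own terms, your proposal has a genuine gap at its central step, and it occurs already in the case you treat as the easy one. When $h(\BD)\not\subset X^{\mathrm{sing}}$ and $p\in h^{-1}(X^{\mathrm{sing}})$, the removable singularity theorem produces a subharmonic function $v$ on $\BD$ agreeing with $u\circ h$ off the discrete set $h^{-1}(X^{\mathrm{sing}})$, but its value at the puncture is forced to be $v(p)=\limsup_{q\to p}u(h(q))$, a lim-sup taken \emph{along the curve}. What you must verify for the Forn{\ae}ss--Narasimhan criterion is subharmonicity of $u^\ast\circ h$ itself, i.e.\ $u^\ast(h(p))\le \frac{1}{2\pi}\int_0^{2\pi}u^\ast(h(p+re^{\ii\theta}))\,d\theta$ for all small $r$; since the circle averages of $v$ decrease to $v(p)$ as $r\to 0$, this is \emph{equivalent} to the inequality $u^\ast(h(p))\le v(p)$, i.e.\ to showing that the lim-sup of $u$ over \emph{all} regular points approaching $h(p)$ does not exceed the lim-sup along this particular disc. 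That is essentially the content of the theorem, and your argument assumes it rather than proving it (a priori one only has $u^\ast(h(p))\ge v(p)$, and a function exceeding a subharmonic function at an isolated point violates the sub-mean inequality there). The same issue reappears, as you yourself flag, in the identification $u^\ast\circ h=(u\circ\pi)^\ast\circ\wtd{h}$ in the degenerate case: the lim-sup defining $(u\circ\pi)^\ast$ at $\wtd{h}(t)$ sees only a neighborhood of one preimage point, while $u^\ast$ at $h(t)$ sees all of them, and neither local irreducibility nor connectedness of the fibers closes this by itself. (The existence and non-constancy of the lift $\wtd{h}$ when $h(\BD)\subset X^{\mathrm{sing}}$ is also not justified by local irreducibility alone.)

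The missing ingredient is the mechanism that actually controls $\limsup_{X^{\reg}\ni y\to x}u(y)$ by values of $u$ on lower-dimensional slices. In the classical proofs (Grauert--Remmert, and Demailly's Th.~1.7) this is done by choosing a local embedding $X\subset\Omega\subset\BC^N$ and a local parametrization realizing $X$ near $x$ as a finite branched cover $\sigma:X\cap U\to B\subset\BC^n$; one shows that the trace function $\hat u(z):=\max_{y\in\sigma^{-1}(z)}u(y)$ is psh on $B$ minus the branch locus and locally bounded above, extends it across by the smooth Riemann extension theorem, and then uses local irreducibility (connectedness of $X^{\reg}\cap U$ for small $U$) to compare $u^\ast$ with $\hat u\circ\sigma$ and to produce the required psh extension to the ambient space. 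Some device of this kind --- or an equivalent maximum-principle argument on the branched cover --- has to be added before either of your two cases closes.
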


\begin{rmk}\label{rmk:embedding_ref_potential}
Fix a point $x \in X$ and a local embedding $X \xhookrightarrow{\loc.} \BC^N$ near $x$ which sends $x$ to the origin $0 \in \BC^N$. 
Note that $\mathfrak{m}_{X,x} = \mathfrak{m}_{\BC^N,0} / \mathscr{I}_{X,0}$ where $\mathscr{I}_X$ is the ideal sheaf of $X \subset \BC^N$.
By Demailly's comparison theorem \cite[Thm.~4]{Demailly_1982}, one can check that the Demailly--Lelong number can also be expressed as 
\[
    \nu(\vph,x) = \lim_{r \to 0} {}^\downarrow \int_{\BB_r(0)} \ddc \vph \w (\ddc \log |z|)^{n-1} \w [X], 
\]
where $\BB_r(0)$ is a ball in $\BC^N$ with radius $r$ centered at $0$.
Similarly, one can also obtain
\[
    s(\vph,x) = \sup \set{\gm \geq 0}{\vph \leq \gm \lt(\log |z|\rt)_{|X} + O(1)}.
\] 
\end{rmk}

\subsection{Proof of Theorem~\ref{bigthm:intersection_and_comparison}}
Recall that $\pi': X' \to X$ is a normalization and $\pi'': \wX \to X'$ is a resolution of $X'$ such that $\pi^{-1}\Fm_{X,x} = \CO_{\wX}(-E)$ where $\pi = \pi' \circ \pi''$, $E = \sum_{i=1}^N m_i E_i$ is effective, $\supp(E)$ is an SNC divisor, and $E_i$'s are irreducible components of the exceptional locus.
We have $\ddc \log \psi_x = \sum_i m_i [E_i] + \ta$ where $\ta$ is a smooth semi-positive form.
By Siu's decomposition theorem, $\ddc \pi^\ast \vph = \sum_i a_i [E_i] + R$. 
Take $D := \sum_i a_i E_i$ and $\om$ a K\"ahler form on $\wX$. 

\smallskip
Before starting the proof of Theorem~\ref{bigthm:intersection_and_comparison}, we recall some useful lemmas and their proofs from \cite{BBEGZ_2019}: 

\begin{lem}[{\cite[p.~73]{BBEGZ_2019}}]
The slope can be expressed as $s(\vph,x) = \min_i (a_i/m_i)$.
\end{lem}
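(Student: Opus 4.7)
The plan is to pull everything back via $\pi$ to $\wX$ and read the slope directly off the Siu decomposition of $\ddc \pi^\ast \vph$. The crucial preliminary observation is that the hypothesis $\pi^{-1}\fm_{X,x} = \CO_{\wX}(-E)$ makes $\pi^\ast \psi_x$ the product of a defining section of $\CO_{\wX}(-E)$ with a non-vanishing factor. Concretely, in a local chart around any $p \in \pi^{-1}(x) \subset \supp(E)$ with coordinates $(z_1,\ldots,z_n)$ in which each $E_i$ passing through $p$ is cut out by $\{z_i = 0\}$, one may write $\pi^\ast f_j = \bigl(\prod_i z_i^{m_i}\bigr) \cdot g_j$ for the local generators $f_j$ of $\fm_{X,x}$; the principality of $\pi^\ast \fm_{X,x} = \CO_{\wX}(-E)$ forces the holomorphic functions $g_j$ to have no common zero, so
\[
    \pi^\ast \log \psi_x = \sum_i m_i \log|z_i| + O(1)
\]
on such a chart, with continuous $O(1)$. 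Compactness of $\pi^{-1}(x)$ then makes this approximation uniform on a neighborhood of the preimage.

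For the inequality $s(\vph,x) \leq \min_i (a_i/m_i)$, suppose $\vph \leq \gm \log \psi_x + O(1)$ near $x$. Pulling back and applying the identity above yields, at a generic smooth point of $E_i$ (away from the other components), $\pi^\ast \vph \leq \gm\, m_i \log|z_i| + O(1)$. Since for psh functions $u \leq v + O(1)$ near a point $q$ implies $\nu(u,q) \geq \nu(v,q)$, the generic Lelong number of $\pi^\ast \vph$ along $E_i$ is at least $\gm m_i$; by the Siu decomposition this generic Lelong number is exactly $a_i$, so $a_i \geq \gm m_i$ for every $i$, whence $\gm \leq \min_i (a_i/m_i)$.

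For the reverse inequality, set $\gm_0 := \min_i (a_i/m_i)$ and consider, on each local chart as above, the function
\[
    v := \pi^\ast \vph - \gm_0 \sum_i m_i \log|z_i|.
\]
Its current $\ddc v = \sum_i (a_i - \gm_0 m_i)[E_i] + R$ is positive, since every coefficient is non-negative by the choice of $\gm_0$ and $R$ is a positive residual current. Hence $v$ is psh on each chart, and therefore locally bounded above by upper semi-continuity. Covering $\pi^{-1}(x)$ by finitely many such charts, which is possible by compactness, upgrades these chart-wise bounds to a single uniform upper bound for $v$ on a neighborhood of the exceptional set. Combined with the first step this gives $\pi^\ast \vph \leq \gm_0 \pi^\ast \log \psi_x + O(1)$ near $\pi^{-1}(x)$, and the surjectivity of $\pi$ descends this to $\vph \leq \gm_0 \log \psi_x + O(1)$ on a neighborhood of $x$, so $s(\vph,x) \geq \gm_0$. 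The main obstacle is ensuring that the local psh bounds on $v$ across different charts yield a single additive constant; this is precisely where the compactness of the fiber $\pi^{-1}(x)$ inside $\supp(E)$ is used.
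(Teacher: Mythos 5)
Your proof is correct and follows essentially the same route as the paper's: identify $\pi^\ast \log \psi_x$ with $\sum_i m_i \log|s_i|_{h_i}$ up to $O(1)$ and read the coefficients $a_i$ as the generic Lelong numbers of $\pi^\ast\vph$ along the $E_i$ via Siu's decomposition. The only difference is that you spell out the reverse inequality $s(\vph,x) \geq \min_i (a_i/m_i)$ --- positivity of $\ddc\bigl(\pi^\ast\vph - \gm_0 \sum_i m_i \log|z_i|\bigr)$ together with compactness of $\pi^{-1}(x)$ to get a single additive constant --- which the paper's one-line conclusion leaves implicit.
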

\begin{proof}
For each $i$, $\pi^\ast \vph$ has generic Lelong number $a_i$ along $E_i$; thus, one has
\[
    a_i = \sup\set{\gm \geq 0}{\pi^\ast\vph \leq \gm \log |s_i|_{h_i} + O(1)} 
\]
where $s_i$ is a section cutting out $E_i$ and $h_i$ is a hermitian metric on $\CO(E_i)$.
Note that
\[
    s(\vph,x) = \sup\set{\gm \geq 0}{\pi^\ast\vph \leq \sum_i \gm m_i \log |s_i|_{h_i} + O(1)}.
\]
This implies that $a_i \geq s(\vph,x) m_i$ for each $i$ and it finishes the proof.
\end{proof}

\begin{lem}[{\cite[p.~73]{BBEGZ_2019}}]\label{lem:psef}
For each $i$, $-D_{|E_i}$ is psef. 
\end{lem}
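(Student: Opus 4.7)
The plan is to exhibit, for each $i$, a closed positive $(1,1)$-current on $E_i$ representing the cohomology class $-[D|_{E_i}]$, which is precisely what it means for $-D|_{E_i}$ to be pseudo-effective.

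First, I would fix auxiliary smooth hermitian metrics $h_j$ on the line bundles $\CO_{\wX}(E_j)$ with Chern curvatures $\theta_j := c_1(\CO_{\wX}(E_j), h_j)$, set $\Theta := \sum_j a_j \theta_j$ (a smooth representative of $[D]$), and let $s_j$ be the canonical section cutting out $E_j$. The Poincar\'e--Lelong formula $\ddc \log|s_j|_{h_j} = [E_j] - \theta_j$ suggests considering
\[
\psi := \pi^\ast \vph - \sum_j a_j \log|s_j|_{h_j}.
\]
A direct computation using the decomposition $\ddc \pi^\ast \vph = [D] + R$ should give $\ddc \psi = R + \Theta$, i.e., $-\Theta + \ddc \psi = R \geq 0$, so that $\psi$ is $(-\Theta)$-plurisubharmonic. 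By construction, $\psi$ has zero generic Lelong number along each $E_j$, since the divisorial parts of $\pi^\ast\vph$ have been explicitly subtracted off.

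Next, I would restrict to $E_i$. Because $\psi$ has zero generic Lelong number along the smooth hypersurface $E_i$ (smooth by the SNC assumption), the standard restriction principle for quasi-psh functions implies that $\psi|_{E_i}$ is a well-defined quasi-psh function on $E_i$, not identically $-\infty$, and that $\ddc$ commutes with this restriction. Restricting the identity from the previous step yields
\[
-\Theta|_{E_i} + \ddc(\psi|_{E_i}) = R|_{E_i} \geq 0
\]
on $E_i$. Since $-\Theta|_{E_i}$ is a smooth representative of the class $-[D|_{E_i}]$ in $H^{1,1}(E_i, \BR)$, this identity exhibits a closed positive current $R|_{E_i}$ in that class, proving $-D|_{E_i}$ is psef.

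The step I expect to require the most care is the restriction principle: one must rigorously justify that $\psi|_{E_i}$ is a genuine quasi-psh function on $E_i$ and that the current equation restricts to its counterpart on $E_i$. This is classical in the smooth case, but in the SNC setting introduces a mild subtlety near crossings $E_i \cap E_j$; fortunately the SNC assumption makes local coordinates completely explicit there, and the vanishing Lelong number of $\psi$ along each $E_j$ handles the well-definedness.
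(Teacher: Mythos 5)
Your reduction to showing that the restricted class of $R$ is pseudo-effective is the same as the paper's first step, and the Poincar\'e--Lelong bookkeeping with $\psi = \pi^\ast\vph - \sum_j a_j\log|s_j|_{h_j}$ is fine: $\psi$ is $(-\Theta)$-psh with $-\Theta + \ddc\psi = R$, and it has zero generic Lelong number along each $E_j$. The gap is exactly at the step you flagged: there is no ``standard restriction principle'' asserting that a quasi-psh function with zero \emph{generic} Lelong number along a smooth hypersurface restricts to a well-defined (not identically $-\infty$) quasi-psh function. The classical restriction theorem takes $\psi|_{E_i}\not\equiv-\infty$ as a \emph{hypothesis}; it is not a consequence of the vanishing of Lelong numbers. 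For instance, on a bidisc the psh function $v(z_1,z_2)=\sum_k 2^{-k}\log\bigl(\tfrac12\bigl(|z_2|+|z_1-a_k|^{4^k}\bigr)\bigr)$, with $(a_k)$ dense in a smaller disc, has zero Lelong number at every point of $\{z_2=0\}$ outside the countable set $\{a_k\}$ --- hence zero generic Lelong number along $\{z_2=0\}$ --- yet $v|_{\{z_2=0\}}=\sum_k 2^{k}\log|z_1-a_k|+O(1)\equiv-\infty$. (Note also that $\pi^\ast\vph$ itself restricts to the constant $\vph(x)$ on each $E_i$, which is $-\infty$ whenever $\vph(x)=-\infty$ even if all Lelong numbers vanish, so your $\psi$ is a pointwise $-\infty+\infty$ on $E_i$.) Hence the positive current $R|_{E_i}$ you want to exhibit need not exist.

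This is precisely why the paper's proof does not restrict $R$ directly: it first applies Demailly's regularization theorem to produce currents $R_k$ with \emph{analytic} singularities in the same Bott--Chern class, with $R_k\geq-\vep_k\om$ and $R_k$ less singular than $R$, so that $\nu(R_k,E_i)=0$. For a current with analytic singularities, vanishing generic Lelong number along $E_i$ forces the polar set to be an analytic set not containing $E_i$, so ${R_k}|_{E_i}$ \emph{is} well defined and $\geq-\vep_k\om|_{E_i}$; this shows $\{R\}_{BC|E_i}+\vep_k\{\om\}_{BC|E_i}$ is psef for every $k$, and one concludes by letting $k\to+\infty$ and using the closedness of the psef cone. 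Your argument becomes correct once this regularization (or an equivalent device) is inserted before the restriction step; as written, the key step is unjustified and, for general $R$, false.
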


\begin{proof}
Let $T = \ddc \pi^\ast\vph$. 
We have $\{T\}_{BC|E_i} = 0$ where $\{\bullet\}_{BC}$ denotes the $(1,1)$ Bott--Chern class. 
Hence, $-D_{|E_i}$ is psef if and only if $\{R\}_{BC|E_i}$ is psef. 
By Demailly's regularization theorem, after slightly shrinking $\wX$, one can find a sequence of $(1,1)$-currents with analytic singularities $(R_k)_k$ converges weakly towards $R$ with the following properties:
\begin{itemize}
    \item $\{R_k\}_{BC} = \{R\}_{BC}$;
    \item $R_k \geq -\vep_k \om$ with $\vep_k > 0$ and $\vep_k \to 0$ as $k \to +\infty$;
    \item $R_k$ is less singular than $R$.
\end{itemize}
We have $\nu(R_k, E_i) = 0$ for all $i$.
Therefore, ${R_k}_{|E_i}$ is a well-defined closed $(1,1)$-current.
We get $(\{R\}_{BC} + \vep_k \{\om\}_{BC})_{|E_i}$ is psef for all $k$ and thus $\{R\}_{BC|E_i}$ is psef for any $i$.
\end{proof}

The strategy of the following lemma originally came from an Izumi-type estimate in \cite[Sec.~6.1]{Boucksom_Favre_Jonsson_2014}:
\begin{lem}[{\cite[Lem.~A.4]{BBEGZ_2019}}]\label{lem:BBEGZ_A4}
There exists a constant $C > 0$ such that 
\[
    \max_i (a_i/m_i) \leq C \min_i (a_i/m_i).
\]
\end{lem}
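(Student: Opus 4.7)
The plan is to combine the pseudo-effectivity of $-D|_{E_j}$ furnished by Lemma~\ref{lem:psef} with the connectedness of the exceptional locus: for each pair of indices $(i,j)$ with $E_i \cap E_j \neq \emptyset$, I would derive a linear inequality between $a_i$ and $a_j$ whose constant depends only on the resolution data, and then propagate the resulting comparison along the dual graph of $E$ to reach all ratios $a_k/m_k$. Two preliminary observations: first, $\supp E = \pi^{-1}(x)$ because $\pi^{-1}\fm_{X,x} = \CO_\wX(-E)$, and this set is connected by Zariski's connectedness theorem applied to the proper birational map $\pi$ over the locally irreducible germ $(X,x)$; second, in the nontrivial case $|I| \geq 2$, connectedness forces every $E_j$ to admit at least one neighbor $E_k$ in the dual graph.

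Next I would fix a very ample class $L$ on $\wX$ and test the psef class $(-D)|_{E_j}$ against the movable curve class $(L|_{E_j})^{n-2}$ on $E_j$ (a generic complete intersection of ample divisors): since psef pairs non-negatively with movable, the projection formula yields
\[
    0 \;\leq\; (-D) \cdot E_j \cdot L^{n-2} \;=\; a_j\, \delta_j - \sum_{i \in \mathrm{adj}(j)} a_i\, \delta_{ij},
\]
where $\delta_j := -E_j^2 \cdot L^{n-2}$, $\delta_{ij} := E_i \cdot E_j \cdot L^{n-2}$, $\mathrm{adj}(j) := \{i \in I \setminus \{j\} : E_i \cap E_j \neq \emptyset\}$, and all other terms vanish by disjointness. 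For $i \in \mathrm{adj}(j)$ the cycle $E_i \cdot E_j$ is a nonzero effective codimension-two class, hence $\delta_{ij} > 0$. Once $\delta_j > 0$ is also established, this yields $a_i \leq (\delta_j/\delta_{ij})\, a_j$ and therefore $a_i/m_i \leq \kappa_{ij} \cdot a_j/m_j$ with $\kappa_{ij} := m_j \delta_j/(m_i \delta_{ij})$, a constant depending only on $\wX$, $E$, and $L$.

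The key remaining point is the strict positivity $\delta_j > 0$, which I expect to be the main obstacle. To handle it I would use the identity
\[
    m_j \cdot (-E_j)|_{E_j} \;=\; (-E)|_{E_j} + \sum_{k \neq j} m_k\, E_k|_{E_j}
\]
on $E_j$: the first summand is nef (as $-E$ is $\pi$-nef and every curve in $E_j$ is $\pi$-contracted), while the second is a nonzero effective divisor on $E_j$ by the adjacency observation above; hence $(-E_j)|_{E_j}$ is a nonzero psef class whose pairing with the ample class $(L|_{E_j})^{n-2}$ is strictly positive, giving $\delta_j > 0$. Finally, iterating the bound $a_i/m_i \leq \kappa_{ij} \cdot a_j/m_j$ along any path in the (finite) dual graph between two indices produces $a_{i_0}/m_{i_0} \leq C \cdot a_{i_1}/m_{i_1}$, and taking $C$ as the maximum such constant over all pairs completes the argument (the case $|I|=1$ being trivial). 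The delicate ingredient is precisely the positivity of $\delta_j$: it is where the $\pi$-nefness of $-E$ must be combined with the SNC and connectedness structure of the resolution to rule out degenerate cancellations.
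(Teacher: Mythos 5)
Your proof follows essentially the same route as the paper's: both invoke Lemma~\ref{lem:psef}, pair $(-D)|_{E_j}$ with the $(n-2)$-nd power of an ample/K\"ahler class to obtain $\sum_{i\neq j} a_i\,(E_i\cdot E_j\cdot L^{n-2}) \leq a_j\,(-E_j^2\cdot L^{n-2})$, and propagate this inequality along a path in the dual graph, whose connectedness comes from Zariski's theorem. The only difference is that you explicitly justify the strict positivity of $\delta_j = -E_j^2\cdot L^{n-2}$ (via the $\pi$-nefness of $-E$ together with the nonzero effective neighbour contribution), a point the paper's proof takes for granted by simply writing $|c_{i,i}|$; this is a welcome refinement of a detail rather than a different argument.
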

\begin{proof}
Since $X$ is locally irreducible, the normalization $\pi': X' \to X$ is a homeomorphism (cf. \cite[Ch. II, Cor.~7.13]{Demailly_agbook}). 
Set $y = (\pi')^{-1}(x)$.
By Zariski's main theorem, $(\pi'')^{-1}(y) = \pi^{-1}(x)$ is connected.  
Reordering $E_i$, we may assume that $a_1/m_1 = \min_i (a_i/m_i)_i$, $a_r/m_r = \max_i (a_i/m_i)_i$, and $E_i \cap E_{i+1} \neq \emptyset$ for all $i \in \{1,\cdots, r-1\}$.
From Lemma~\ref{lem:psef}, for all $i$, 
\[
    (-D_{|E_i}) \cdot (\om_{|E_i})^{n-2} 
    = - \sum_j a_j c_{i,j} \geq 0
\] 
where $c_{i,j} := (E_i \cdot E_j \cdot \om^{n-2})$.
Then we have $\sum_{i \neq j} a_j c_{i,j} \leq a_i |c_{i,i}|$.
Note that $c_{i,j} \geq 0$ if $j \neq i$ and $c_{i,i+1} > 0$ for all $i = 1, \cdots r-1$.
Hence, $a_{i+1} c_{i,i+1} \leq a_i |c_{i,i}|$ for all $i \in \{1, \cdots, r-1\}$.
All in all, one can deduce 
\[
    \max_i (a_i/m_i) 
    = \frac{a_r}{m_r} 
    \leq \lt(\prod_{i=1}^{r-1} \frac{m_i |c_{i,i}|}{m_{i+1} c_{i,i+1}}\rt) \frac{a_1}{m_1} = C \min_i (a_i/m_i)
\]
where $C := \prod_{i=1}^{r-1} \frac{m_i |c_{i,i}|}{m_{i+1} c_{i,i+1}}$. 
\end{proof}

We are now ready to prove Theorem~\ref{bigthm:intersection_and_comparison}:

\begin{proof}[Proof of Theorem~\ref{bigthm:intersection_and_comparison}]
The main strategy follows from a combination of the approximation approach as in \cite[Prop~4.2]{Pan_Trusiani_2023} and Lemma~\ref{lem:BBEGZ_A4}.
By \cite[Thm.~5.5]{FN_1980}, there is a sequence of smooth strictly psh functions $(\vph_j)_j$ decreasing towards $\vph$. 
Set 
\[
    L_{r, j,\vep_1,\cdots, \vep_{n-1}} = \int_{\{\psi_x < r\}} \ddc \vph_j \w \ddc \log (\psi_x + \vep_1) \w \cdots \w \ddc \log (\psi_x + \vep_{n-1}).
\]
Then $\nu(\vph, x) = \lim_{r \to 0} \lim_{j \to +\infty} \lim_{\vep_1 \to 0} \cdots \lim_{\vep_{n-1} \to 0} L_{r,j,\vep_1,\cdots,\vep_{n-1}}$. 
Recall that 
\[
    \ddc \pi^\ast \vph = \sum_i a_i [E_i] + R
    \quad\text{and}\quad
    \ddc \pi^\ast \log \psi_x = \sum_i m_i [E_i] + \theta 
\]
where $R$ is a current whose generic Lelong number along each $E_i$ is zero, and $\ta$ is a smooth semi-positive $(1,1)$-form. 
Pulling back the integration to $\wX$, we obtain
\begin{align*}
    \lim_{\vep_{n-1} \to 0} L_{r,j,\vep_1,\cdots,\vep_{n-1}}
    &=\lim_{\vep_{n-1} \to 0} 
    \int_{\pi^{-1}(\{\psi_x < r\})} \ddc \pi^\ast\vph_j \w \bigwedge_{k=1}^{n-1} \ddc \log (\pi^\ast \psi_x + \vep_k)\\
    &= \int_{\pi^{-1}(\{\psi_x < r\})} \ddc \pi^\ast\vph_j \w \bigwedge_{k=1}^{n-2} \ddc \log (\pi^\ast \psi_x + \vep_k) \w \lt(\sum_{i=1}^N m_i [E_i] + \ta\rt).
\end{align*}
Since $\pi^\ast \vph_j$ is constant along each $E_i$, $\int_{\pi^{-1}(\{\psi_x < r\})} \ddc \pi^\ast \vph \w \bigwedge_{k=1}^{n-2} \ddc \log (\pi^\ast \psi_x + \vep_k) \w [E_i] = 0$. 
Hence,
\[
    \lim_{\vep_{n-1} \to 0} L_{r,j,\vep_1,\cdots,\vep_{n-1}} 
    = \int_{\pi^{-1}(\{\psi_x < r\})} \ddc \pi^\ast\vph_j \w \bigwedge_{k=1}^{n-2} \ddc \log (\pi^\ast \psi_x + \vep_k) \w \ta
\]
and then inductively, we obtain 
\[
    \nu(\vph,x) = \lim_{r \to 0} \int_{\pi^{-1}(\{\psi_x < r\})} \lt(\sum_i a_i [E_i] + R\rt) \w \ta^{n-1} 
    = \sum_i a_i \int_{E_i} \ta^{n-1} + \lim_{r \to 0} \int_{\pi^{-1}(\{\psi_x < r\})} R \w \ta^{n-1}.
\]
Note that $R \w \ta^{n-1}$ puts no mass along $E_i$ for all $i$; indeed, $\1_{E_i} R = \nu(R,E_i) [E_i] = 0$. 
Hence, $\nu(\vph,x) = \sum_i a_i \int_{E_i} \ta_{|E_i}^{n-1}$ and this shows the intersection expression of the Demailly--Lelong number.

\smallskip
On the other hand, by Thie's theorem \cite{Thie_1967}, we have $\mult(X,x) = \nu(\log \psi_x, x)$ and thus, 
\[
    \mult(X,x) = \sum_i m_i \int_{E_i} \ta_{|E_i}^{n-1} 
    =\sum_i m_i e_i
\]
where $e_i := \int_{E_i} \ta^{n-1} = E_i \cdot \ta^{n-1} \geq 0$.
Then we have 
\begin{equation}\label{eq:Lelong_expression}
    \nu(\vph,x) = \sum_i \frac{a_i}{m_i} m_i e_i \leq \mult(X,x) \max_i (a_i/m_i).
\end{equation}
By Lemma~\ref{lem:BBEGZ_A4}, $\max_i (a_i/m_i) \leq C \min_i (a_i/m_i)$ and this completes the proof.
\end{proof}

\begin{rmk}
The equality $\nu(\vph, x) = \sum_i \frac{a_i}{m_i} m_i e_i$ in \eqref{eq:Lelong_expression} also implies 
\[
    \mult(X,x) \cdot s(\vph,x) = \lt(\sum_i m_i e_i\rt) \cdot \min(a_i/m_i) \leq \nu(\vph,x).  
\]
This shows an alternative proof of \eqref{eq:slope<DL}.
\end{rmk}

\section{Sharp estimates for two-dimensional ADE singularities}\label{sec:ADE}
We recall the formulas for two dimensional ADE singularities: 
\begin{eqnarray*}
    \RA_k: & \quad (x^2 + y^2 + z^{k+1} = 0) \subset \BC^3 & \quad (k \in \BN_{\geq 1}),\\
    \RD_k: & \quad (x^2 + y^2 z + z^{k-1} = 0) \subset \BC^3 & \quad (k \in \BN_{\geq 4}),\\
    \RE_6: & \quad (x^2 + y^3 + z^4 = 0) \subset \BC^3, & \\
    \RE_7: & \quad (x^2 + y^3 + y z^3= 0) \subset \BC^3, & \\
    \RE_8: & \quad (x^2 + y^3 + z^5 = 0) \subset \BC^3. &
\end{eqnarray*}
Note that the multiplicity at the origin is $2$ in each case. 
With very explicit resolution graphs for two-dimensional ADEs, we obtain the following result:

\begin{prop}[Proposition~\ref{bigprop:prop_C}]
Let $(X,x)$ be a two-dimensional ADE-singularity. 
Then the following hold:
\begin{enumerate}
    \item if $(X,x) \simeq (\RA_k, 0)$, then $\nu(\vph,x) \leq (k+1) s(\vph,x)$ and this estimate is sharp;
    \item if $(X,x) \simeq (\RD_k, 0)$, $(\RE_6, 0)$, $(\RE_7, 0)$ or $(\RE_8, 0)$, then $\nu(\vph,x) = 2 s(\vph,x)$.
\end{enumerate}
\end{prop}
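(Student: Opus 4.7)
The plan is to combine the intersection-number formula $\nu(\vph, x) = \sum_i a_i e_i$ from Theorem~\ref{bigthm:intersection_and_comparison} with the psef constraint of Lemma~\ref{lem:psef}, working on the minimal resolution $\pi \colon \wX \to X$; by Artin's theorem on rational double points this is already a log resolution of $\mathfrak{m}_x$ with $\pi^{-1}\mathfrak{m}_x = \CO_\wX(-Z)$, where $Z = \sum m_i E_i$ is the fundamental cycle, each $E_i \simeq \BP^1$ has $E_i^2 = -2$, and $E_i \cdot E_j \in \{0,1\}$ is the adjacency of the corresponding Dynkin diagram with $m_i$ the marks of the affine Dynkin. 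First I would read off $e_i = -(Z \cdot E_i) = 2 m_i - \sum_{j \sim i} m_j$ for each type: for $\RA_k$ the endpoints give $e_1 = e_k = 1$, all other $e_i = 0$, and every $m_i = 1$; for each of $\RD_k, \RE_6, \RE_7, \RE_8$ a unique vertex $i^*$ has $e_{i^*} > 0$, and it satisfies $e_{i^*} = 1, m_{i^*} = 2$; in every case $\sum_i m_i e_i = \mult(X,x) = 2$, consistent with Thie's theorem.

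For $\RA_k$ the task becomes to bound $a_1 + a_k$ under the concavity constraints $a_{i-1} + a_{i+1} \leq 2 a_i$ and the boundary conditions $a_2 \leq 2 a_1$, $a_{k-1} \leq 2 a_k$ coming from psef at the endpoints. Extending by $a_0 := 0 =: a_{k+1}$ produces a concave sequence on $\{0,\ldots,k+1\}$ with vanishing boundary; monotonicity of the slopes $a_i/i$ and $a_i/(k+1-i)$ then forces $a_1 + a_k \leq (k+1) \min_i a_i$, with the arithmetic profile $a_i = i$ saturating the bound. Sharpness at the level of psh functions is then exhibited in Section~\ref{sec:rmk} via the cyclic-quotient chart $\BC^2/\BZ_{k+1} \to \RA_k$.

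For the $\RD, \RE$-cases, since only $E_{i^*}$ contributes one has $\nu(\vph, x) = a_{i^*}$, so $\nu(\vph, x) \geq 2 s(\vph, x)$ is immediate from $a_{i^*}/m_{i^*} \geq s(\vph,x)$. The content is to establish the reverse by showing $\min_j (a_j/m_j)$ is actually attained at $i^*$, i.e.\ $a_j/m_j \geq a_{i^*}/2$ for every $j$. For $\RD_k$ I would propagate along the Dynkin graph: psef at the two short-arm leaves gives $a_{k-1}, a_k \geq a_{k-2}/2$, plugging into psef at the branch vertex then forces $a_{k-3} \leq a_{k-2}$, iterated chain concavity provides $a_2 \leq a_3 \leq \cdots \leq a_{k-2}$, and the far leaf constraint gives $a_1 \geq a_2/2$, closing all the required inequalities. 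For each of $\RE_6, \RE_7, \RE_8$ I would produce an explicit non-negative combination $c \geq 0$ of the psef inequalities equal to $2 a_j - m_j a_{i^*}$; the critical step is at the trivalent node, e.g.\ in $\RE_6$ one derives $a_{\mathrm{center}} \geq (3/2) a_{i^*}$ by combining psef at the five non-$i^*$ vertices.

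The main obstacle lies in the exceptional $\RE$-cases: the linear system $-Ma \geq 0$ has dimension seven or eight, and the inequality $a_j/m_j \geq a_{i^*}/m_{i^*}$ does not follow from any single psef constraint, only from the right non-negative combination dictated by the Green's function $Z^{(j)} = -M^{-1} e_j$ of the negative-definite intersection pairing. Nevertheless the verifications are finite and explicit, and completing them for each type finishes the proof.
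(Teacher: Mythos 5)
Your proposal is correct and follows essentially the same route as the paper: work on the minimal resolution with the fundamental-cycle multiplicities, feed the psef inequalities $-(D\cdot E_i)\geq 0$ from Lemma~\ref{lem:psef} into the intersection formula of Theorem~\ref{bigthm:intersection_and_comparison}, and settle each Dynkin type by explicit linear algebra, deferring sharpness for $\RA_k$ to the cyclic-quotient-chart example. Your observation that $e_i=2m_i-\sum_{j\sim i}m_j$ is supported on a single vertex in the $\RD$/$\RE$ cases (and on the two endpoints for $\RA_k$) is just a tidy repackaging of the paper's computation of $-D\cdot E$, and your concave-sequence argument for $\RA_k$ is the paper's telescoping-differences argument in different clothing.
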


\begin{rmk}
The resolution graph of two-dimensional ADE-singularities that we will use below can be found in some standard references of singularity theory and two-dimensional complex geometry (e.g. \cite[Sec.~4.2]{Kollar_Mori_1998}, \cite[Ch.~3, Sec.~3]{BHPV_book}). 
In each resolution graph below, we always have 
\[
    E_i \cdot E_j =
    \begin{cases}
        -2 & \text{if}\,\,\, i = j,\\
        1 & \text{if}\,\,\, i \neq j \,\,\,\text{and}\,\,\, E_i \cap E_j \neq \emptyset,\\
        0 & \text{if}\,\,\, E_i \cap E_j = \emptyset.
    \end{cases}
\]
In the graphs below, the numbers next to each $E_i$ denote the multiplicities (i.e. $m_i$) such that $\pi^{-1} \fm_{X,x} = \CO(-\sum_i m_i E_i)$. 
\end{rmk}

\subsection{$\RA_k$-singularities} 
For $k \geq 1$, an $\RA_k$-singularity is locally isomorphic to $(x^2 + y^2 + z^{k+1} = 0) \subset \BC^3$ near $0$.
The dual graph of the minimal resolution $\pi: (\wX, \CO(-E)) \to (\RA_k,\fm_0)$ is the following
\begin{center}
\begin{dynkinDiagram}[
         scale=1.8] A{oo.oo}    
\node[below] at (root 1) {1};
\node[above] at (root 1) {$E_1$};
\node[below] at (root 2) {1};
\node[above] at (root 2) {$E_2$};
\node[below] at (root 3) {1};
\node[above] at (root 3) {$E_{k-1}$};
\node[below] at (root 4) {1};
\node[above] at (root 4) {$E_{k}$};
\end{dynkinDiagram}
\end{center}
and 
$\pi^{-1} \Fm_{0} = \CO(-(E_1 + E_2 + \cdots + E_{k-1} + E_k))$.
If an effective divisor $D = \sum_{i=1}^k a_i E_i$ satisfies that $-D_{|E_i}$ is psef (and thus nef) for each $i$, then we have the following inequalities for the coefficients $(a_i)_i$:
\[
    \begin{cases}
        2a_1 - a_2 \geq 0,\\
        2a_i - a_{i-1} - a_{i+1} \geq 0 \qquad (2\leq i \leq k-1),\\ 
        2a_k - a_{k-1} \geq 0.\\
    \end{cases}
\]
By the above inequalities, one can deduce
\[
    \begin{cases}
        a_1 \geq a_2 - a_1 \geq a_3 - a_2 \geq \cdots \geq a_i - a_{i-1} \geq \cdots \geq a_k - a_{k-1},\\
        a_k \geq a_{k-1} - a_k \geq a_{k-2} - a_{k-1} \geq \cdots \geq a_{i-1} - a_i \geq \cdots \geq a_1 - a_2.
    \end{cases}
\]
Hence, $a_i \leq i a_1$ and $a_i \leq (k-i+1) a_k$ for all $1 \leq i \leq k$.
Let $\vph$ be a psh function defined near $(\RA_k,0)$. 
Suppose that $\ddc \pi^\ast \vph = [D] + R$ with $\nu(R,E_i) = 0 $ for all $i$. 
Then we obtain
\[
    s(\vph,0) = \min\{a_1, a_2, \cdots, a_k\}
    = \min\{a_1, a_k\},
\]
and 
\begin{equation}\label{eq:A_k_est}
    \nu(\vph,0) = a_1 + a_k \leq (k+1) s(\vph,0).
\end{equation}
To check the estimate \eqref{eq:A_k_est} is sharp, we refer to examples in Example~\ref{eg:A_k_sharp}. 

\subsection{$\RD_k$-singularities}
For a $k \geq 4$, a $\RD_k$-singularity is locally isomorphic to $(x^2 + y^2 z + z^{k-1} = 0) \subset \BC^3$ near $0$.
The dual graph of the minimal resolution $\pi: (\wX, \CO(-E)) \to (\RD_k,\fm_0)$ is 
\begin{center}
\begin{dynkinDiagram}[
         scale=1.8] D{oo.oooo}    
\node[below] at (root 1) {1};
\node[above] at (root 1) {$E_1$};
\node[below] at (root 2) {2};
\node[above] at (root 2) {$E_2$};
\node[below] at (root 3) {2};
\node[above] at (root 3) {$E_{k-3}$};
\node[below] at (root 4) {2};
\node[right] at (root 4) {$E_{k-2}$};
\node[right] at (root 5) {1};
\node[above right] at (root 5) {$E_{k}$};
\node[right] at (root 6) {1};
\node[above right] at (root 6) {$E_{k-1}$};
\end{dynkinDiagram}
\end{center}
and 
$\pi^{-1} \Fm_0 = \CO(-(E_1 + 2E_2 + \cdots + 2E_{k-3} + 2E_{k-2} + E_{k-1} + E_k))$.
If $D = \sum_{i=1}^{k} a_i E_i$ is effective and $-D_{|E_i}$ is psef for each $i$, then the coefficients satisfying 
\[
\begin{cases}
    2a_1 - a_2 \geq 0,\\
    2a_i - a_{i-1} - a_{i_1} \geq 0 \qquad (2\leq i \leq k-3),\\
    2a_{k-2} - a_{k-3} - a_{k-1} - a_{k} \geq 0, \\
    2a_{k-1} - a_{k-2} \geq 0, \\
    2a_k - a_{k-2} \geq 0. 
\end{cases}
\]
Let $\vph$ be a psh function defined near $(\RD_k, 0)$.
Suppose that $D$ is the divisorial part of $\ddc \pi^\ast \vph$ over the origin. 
By direct computations, one has
\[
    \mult(\RD_k,0) \cdot s(\vph,0) 
    = 2 \min\lt\{a_1, \frac{a_2}{2}, \cdots, \frac{a_{k-2}}{2}, a_{k-1}, a_k\rt\}
    = a_2
\]
and 
\begin{align*}
    \nu(\vph,0) = -D \cdot E 
    &= 2(a_1 + 2a_2 + \cdots + 2a_{k-3} + 2a_{k-2} + a_{k-1} + a_k) \\
    &\qquad - (2a_1 + 3a_2 + 4a_3 + \cdots + 4a_{k-2} + 2a_{k-1} + 2a_k)\\
    &= a_2 = \mult(\RD_k,0) \cdot s(\vph,0).
\end{align*}

\subsection{$\RE_6$, $\RE_7$, $\RE_8$-singularities}
We now discuss the $\RE_6$, $\RE_7$, $\RE_8$-singularities case by case. 

\subsubsection{$\RE_6$-singularity}
An $\RE_6$ singularity is locally isomorphic to $(x^2 + y^3 + z^4 = 0) \subset \BC^3$ and it has the following dual graph for its minimal resolution $\pi: (\wX, \CO(-E)) \to (\RE_6, \fm_0)$:
\begin{center}
\begin{dynkinDiagram}[
         scale=1.8] E{6}    
\node[below] at (root 1) {1};
\node[above] at (root 1) {$E_1$};
\node[below] at (root 2) {2};
\node[above] at (root 2) {$E_2$};
\node[below] at (root 3) {3};
\node[above right] at (root 3) {$E_3$};
\node[below] at (root 4) {2};
\node[above] at (root 4) {$E_5$};
\node[below] at (root 5) {1};
\node[above] at (root 5) {$E_6$};
\node[right] at (root 6) {2};
\node[above] at (root 6) {$E_4$};
\end{dynkinDiagram}
\end{center}
and 
$\pi^{-1} \Fm_0 = \CO(-(E_1 + 2E_2 + 3E_3 + 2E_4 + 2E_5 + E_6))$.
If $D = \sum_{i=1}^6 a_i E_i$ is effective and $-D_{|E_i}$ is psef for each $i$, then 
\[
\begin{cases}
    2a_1 - a_2 \geq 0,\\
    2a_2 - a_1 - a_3 \geq 0,\\
    2a_3 - a_2 - a_4 - a_5 \geq 0,\\
    2a_4 - a_3 \geq 0,\\
    2a_5 - a_3 - a_6 \geq 0,\\
    2a_6 - a_5 \geq 0. 
\end{cases}
\]
Let $\vph$ be a psh function defined near $(\RE_6, 0)$ such that $D$ is the divisorial part of $\ddc \pi^\ast \vph$ over the origin. 
One can infer
\[
    \mult(\RE_6,0) \cdot s(\vph,0) 
    = 2 \min\lt\{a_1, \frac{a_2}{2}, \frac{a_3}{3}, \frac{a_4}{2}, \frac{a_5}{2}, a_6\rt\}
    = \min\lt\{2a_1, a_2, \frac{2a_3}{3}, a_4, a_5, 2a_6\rt\}
    = a_4
\]
and 
\begin{align*}
    \nu(\vph,0) = -D \cdot E 
    &= 2(a_1 + 2a_2 + 3a_3 + 2a_4 + 2a_5 + a_6) \\
    &\qquad - (2a_1 + 4a_2 + 6a_3 + 3a_4 + 4a_5 + 2a_6)\\
    &= a_4 = \mult(\RE_6,0) \cdot s(\vph,0).
\end{align*}

\subsubsection{$\RE_7$-singularity}
An $\RE_7$ singularity is locally isomorphic to $(x^2 + y^3 + yz^3 = 0) \subset \BC^3$ and it has the following dual graph for its minimal resolution $\pi: (\wX, \CO(-E)) \to (\RE_7, \fm_0)$:
\begin{center}
\begin{dynkinDiagram}[
         scale=1.8] E{7}    
\node[below] at (root 1) {2};
\node[above] at (root 1) {$E_1$};
\node[below] at (root 2) {3};
\node[above] at (root 2) {$E_2$};
\node[below] at (root 3) {4};
\node[above right] at (root 3) {$E_3$};
\node[below] at (root 4) {3};
\node[above] at (root 4) {$E_5$};
\node[below] at (root 5) {2};
\node[above] at (root 5) {$E_6$};
\node[below] at (root 6) {1};
\node[above] at (root 6) {$E_7$};
\node[right] at (root 7) {2};
\node[above] at (root 7) {$E_4$};
\end{dynkinDiagram}
\end{center}
and 
$\pi^{-1} \Fm_0 = \CO(-(2E_1 + 3E_2 + 4E_3 + 2E_4 + 3E_5 + 2E_6 + E_7))$.
Again, if $D = \sum_{i=1}^{7} a_i E_i$ is effective and $-D_{|E_i}$ is psef for each $i$, then we have the following inequalities for the coefficients:
\[
\begin{cases}
    2a_1 - a_2 \geq 0,\\
    2a_2 - a_1 - a_3 \geq 0,\\
    2a_3 - a_2 - a_4 - a_5 \geq 0,\\
    2a_4 - a_3 \geq 0,\\
    2a_5 - a_3 - a_6 \geq 0,\\
    2a_6 - a_5 - a_7 \geq 0,\\
    2a_7 - a_6 \geq 0.
\end{cases}
\]
Let $\vph$ be a psh function defined near $(\RE_7,0)$ and $D$ is the divisorial component of $\ddc \pi^\ast \vph$ along the exceptional divisor. 
One can compute
\[
    \mult(\RE_7,0) \cdot s(\vph,0) 
    = 2 \min\lt\{\frac{a_1}{2}, \frac{a_2}{3}, \frac{a_3}{4}, \frac{a_4}{2}, \frac{a_5}{3}, \frac{a_6}{2}, a_7\rt\}
    = a_1
\]
and 
\begin{align*}
    \nu(\vph,0) = -D \cdot E
    &= 2(2a_1 + 3a_2 + 4a_3 + 2a_4 + 3a_5 + 2a_6 + a_7) \\
    &\qquad - (3a_1 + 6a_2 + 8a_3 + 4a_4 + 6a_5 + 4a_6 + 2a_7)\\
    &= a_1 = \mult(\RE_7,0) \cdot s(\vph,0).
\end{align*}

\subsubsection{$\RE_8$-singularity}
An $\RE_8$ singularity is locally isomorphic to $(x^2 + y^3 + z^5 = 0) \subset \BC^3$, and it has the following dual graph for its minimal resolution $\pi: (\wX, \CO(-E)) \to (\RE_8, \fm_0)$:
\begin{center}
\begin{dynkinDiagram}[
         scale=1.8] E{8}    
\node[below] at (root 1) {2};
\node[above] at (root 1) {$E_8$};
\node[below] at (root 2) {3};
\node[above] at (root 2) {$E_7$};
\node[below] at (root 3) {4};
\node[above] at (root 3) {$E_6$};
\node[below] at (root 4) {5};
\node[above] at (root 4) {$E_5$};
\node[below] at (root 5) {6};
\node[above right] at (root 5) {$E_3$};
\node[below] at (root 6) {4};
\node[above] at (root 6) {$E_2$};
\node[below] at (root 7) {2};
\node[above] at (root 7) {$E_1$};
\node[right] at (root 8) {3};
\node[above] at (root 8) {$E_4$};
\end{dynkinDiagram}
\end{center}
and $\pi^{-1} \Fm_0 
= \CO(-(2E_1 + 4E_2 + 6E_3 + 3E_4 + 5E_5 + 4E_6 + 3E_7 + 2E_8))$.
Let $D$ be an effective divisor with $-D_{|E_i}$ being psef for all $i$. 
Then 
\[
\begin{cases}
    2a_1 - a_2 \geq 0,\\
    2a_2 - a_1 - a_3 \geq 0,\\
    2a_3 - a_2 - a_4 - a_5 \geq 0,\\
    2a_4 - a_3 \geq 0,\\
    2a_5 - a_3 - a_6 \geq 0,\\
    2a_6 - a_5 - a_7 \geq 0,\\
    2a_7 - a_6 - a_8 \geq 0,\\
    2a_8 - a_7 \geq 0.
\end{cases}
\]
Take $\vph$ a psh function defined near $(\RE_8, 0)$ and assume that $D$ is the divisorial part of $\ddc \pi^\ast \vph$ over $0$.
This yields
\[
    \mult(\RE_8,0) \cdot s(\vph,0) 
    = 2 \min\lt\{\frac{a_1}{2}, \frac{a_2}{4}, \frac{a_3}{6}, \frac{a_4}{3}, \frac{a_5}{5}, \frac{a_6}{4}, \frac{a_7}{3}, \frac{a_8}{2}\rt\}
    = a_8
\]
and 
\begin{align*}
    \nu(\vph,0) = -D \cdot E 
    &= 2(2a_1 + 4a_2 + 6a_3 + 3a_4 + 5a_5 + 4a_6 + 3a_7 + 2a_8) \\
    &\qquad - (4a_1 + 8a_2 + 12a_3 + 6a_4 + 10a_5 + 8a_6 + 6a_7 + 3a_8)\\
    &= a_8 = \mult(\RE_8,0) \cdot s(\vph,0).
\end{align*}

\section{Quotient, cone, and final remarks}\label{sec:rmk}
In this section, we first pay special attention to a comparison of the slope and the Demailly-Lelong number on quotient singularities (Proposition~\ref{bigprop:quotient}). 
We then illustrate examples that show the estimate in Proposition~\ref{bigprop:prop_C} on $\RA_k$-singularities is sharp via the computation method on quotient singularities.
Additionally, we explore a comparison on a cone over smooth hypersurfaces in $\BP^n$. 
At the end, we address some questions on the uniform version of the comparison and maximum ratio between the Demailly--Lelong number and the slope.

\subsection{Quotient singularities}
We first give a quick overview of quotient singularities.
For more details, interested readers are referred to some standard references (see e.g. \cite[Ch.~IV]{Lamotke_book_1986}).
We say that $(X,x)$ is a quotient singularity if there exists a neighborhood $U$ that is isomorphic to a quotient $V/G$, where $G$ is a finite subgroup of $\RGL(n,\BC)$ acting linearly on an open set $V \subset \BC^n$. 
We may assume $X = \BC^n/G$, with $x$ being the image of $0$ via the quotient map $\pi: \BC^n \to \BC^n/G$.
The action $G$ induces a corresponding action on functions, given by $(g \cdot f)(p) = f(g^{-1}\cdot p)$ for $g \in G$. 
Denote by $\BC[x_1,\cdots,x_n]^G$ the ring of $G$-invariant polynomials on $\BC^n$.
Note that $\BC[x_1,\cdots,x_n]^G$ is finitely generated. 
Let $(f_i)_{i=1,\cdots,N}$ be polynomials generating $\BC[x_1,\cdots,x_n]^G$. 
The polynomial map $j: \BC^n \to \BC^N$ with components $(f_1, \cdots, f_N)$ induces a morphism 
\[
    j^\ast: \BC[z_1,\cdots,z_N] \to \BC[x_1,\cdots,x_n]^G,
    \quad F(z_1,\cdots,z_N) \overset{j^\ast}{\mapsto} F(f_1,\cdots,f_N).
\] 
The kernel of $j^\ast$ is a finitely generated ideal $\CI = \langle g_1, \cdots, g_r \rangle$, which defines an affine orbit variety $V = j(\BC^n) = \set{z \in \BC^n}{g_1(z) = \cdots = g_r(z) = 0}$.
We have $\BC[x_1,\cdots,x_n]^G \simeq \BC[z_1,\cdots,z_N]/\CI$ and a homeomorphism $J: \BC^n/G \to V$ induced by $j$.
This allows us to view $X = \BC^n/G$ as an affine algebraic variety within $\BC^N$.

\smallskip
When $(X,x)$ is isomorphic to a quotient singularity $(\BC^n/G, \pi(0))$ for $G$ a finite subgroup of $\RGL(n,\BC)$, we obtain the following comparison of the slope and the Demailly--Lelong number:  

\begin{prop}[Proposition~\ref{bigprop:quotient}]
Suppose that $(X,x)$ is isomorphic to a quotient singularity $(\BC^n/G, \pi(0))$, where $G$ is a finite subgroup of $\RGL(n,\BC)$.
Then for any germ for plurisubharmonic function $\vph: (X,x) \to \BR \cup \{-\infty\}$, 
\[
    \nu(\vph,x) \leq |G|^{n-1} \cdot s(\vph,x)
\]
where $|G|$ is the order of $G$.
\end{prop}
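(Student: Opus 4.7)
\textbf{Proof proposal for Proposition~\ref{bigprop:quotient}.}
The plan is to lift everything to $\BC^n$ via the quotient map $\pi:\BC^n \to \BC^n/G \simeq (X,x)$ with $\pi(0)=x$, so that $\wvph:=\pi^\ast\vph$ is $G$-invariant and psh near $0$. On the smooth manifold $\BC^n$ the slope and Demailly--Lelong number of $\wvph$ at $0$ coincide, and the idea is to transport this equality back to $X$ at the cost of an explicit $|G|^{n-1}$ factor.

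The first ingredient I would establish is a two-sided comparison between $\pi^\ast\psi_x$ and the Euclidean radius $|z|$ on $\BC^n$. Let $f_1,\dots,f_N$ be generators of the invariant maximal ideal $\fm_{X,x}\subset\BC[z]^G$, so that $\pi^\ast\psi_x=\sqrt{\sum_i|f_i|^2}$. The upper bound $\pi^\ast\psi_x\leq C|z|$ is immediate since each $f_i$ vanishes at $0$. For the reverse, each coordinate $z_i$ is a root of the monic polynomial $\prod_{g\in G}(T-g\cdot z_i)\in\BC[z]^G[T]$ of degree $|G|$, whose non-leading coefficients are $G$-invariant and vanish at the origin, hence lie in $(f_1,\dots,f_N)\BC[z]$. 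This forces $z_i^{|G|}\in(f_1,\dots,f_N)\BC[z]$ for each $i$ and gives $|z|^{|G|}\leq C'\pi^\ast\psi_x$ in a neighborhood of $0$, i.e.
\[
    |G|\log|z| \leq \log\pi^\ast\psi_x \leq \log|z|+O(1).
\]

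From the lower estimate the slope bound $s(\wvph,0)\leq|G|\,s(\vph,x)$ follows at once: if $\wvph\leq\gamma\log|z|+O(1)$, then $\wvph\leq\tfrac{\gamma}{|G|}\log\pi^\ast\psi_x+O(1)$, hence $\vph\leq\tfrac{\gamma}{|G|}\log\psi_x+O(1)$ by surjectivity of $\pi$. For the Demailly--Lelong number I would use Remark~\ref{rmk:embedding_ref_potential} together with the embedding $j:=J\circ\pi:\BC^n\to\BC^N$ (so that $|j|=\pi^\ast\psi_x$); since $\pi$ is a $|G|$-to-$1$ branched cover, pulling back the defining integral through $j$ yields
\[
    \nu(\vph,x) = \frac{1}{|G|}\,\nu(\wvph,\log\pi^\ast\psi_x),
\]
where the right-hand side is Demailly's generalized Lelong number on $\BC^n$.

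The argument then finishes by applying Demailly's comparison theorem for generalized Lelong numbers to the psh weights $\log\pi^\ast\psi_x$ and $\log|z|$. From the two-sided bound above one has $\limsup_{z\to0}\log\pi^\ast\psi_x/\log|z|\leq|G|$ (both sides tending to $-\infty$), and since $\ddc\wvph$ has bidimension $(n-1,n-1)$, the comparison theorem gives
\[
    \nu(\wvph,\log\pi^\ast\psi_x) \leq |G|^{n-1}\,\nu(\wvph,\log|z|) = |G|^{n-1}\,\nu(\wvph,0) = |G|^{n-1}\,s(\wvph,0),
\]
using the equality between slope and Lelong number on smooth $\BC^n$. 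Combining with the slope bound one obtains $\nu(\vph,x)\leq |G|^{n-2}\,s(\wvph,0)\leq|G|^{n-1}\,s(\vph,x)$. The main technical obstacle is the algebraic lower estimate $|z|^{|G|}\leq C'\pi^\ast\psi_x$, which rests on the integrality of $\BC[z]$ over $\BC[z]^G$ of degree exactly $|G|$ and is precisely what forces the exponent $|G|$ in the final inequality; all other steps rely on standard pluripotential-theoretic tools.
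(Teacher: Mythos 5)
Your proposal is correct and follows the same overall architecture as the paper's proof: lift to the quotient chart, write $\nu(\vph,x)$ as $\tfrac{1}{|G|}$ times the generalized Lelong number of $\pi^\ast\vph$ with respect to the weight $\log\pi^\ast\psi_x$, apply Demailly's comparison theorem against the weight $\log|z|$ via the estimate $|G|\log|z|\leq\log\pi^\ast\psi_x+O(1)$, and conclude using the equality of slope and Lelong number at the smooth point $0\in\BC^n$ together with the slope comparison $s(\pi^\ast\vph,0)\leq|G|\,s(\vph,x)$; your final bookkeeping $\nu(\vph,x)\leq|G|^{n-2}s(\pi^\ast\vph,0)\leq|G|^{n-1}s(\vph,x)$ is exactly the paper's. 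The one point where you genuinely diverge is the proof of the central estimate \eqref{eq:quotient_ref_potentials}: the paper chooses homogeneous generators, invokes Noether's bound $\deg f_i\leq|G|$ for generators of the invariant ring, and then runs a compactness argument on the unit sphere, whereas you observe that each coordinate $z_i$ is a root of the monic orbit polynomial $\prod_{g\in G}(T-g\cdot z_i)$ whose non-leading coefficients are invariants vanishing at $0$, so that $z_i^{|G|}$ lies in the ideal generated by $(f_1,\dots,f_N)$ and hence $|z|^{|G|}\leq C\,\pi^\ast\psi_x$ near the origin. Your integrality argument is self-contained (it effectively reproves the needed consequence of Noether's theorem rather than citing it) and does not require choosing homogeneous generators, while the paper's route is shorter once Noether's degree bound is taken as a black box; both produce the same exponent $|G|$ and therefore the same constant $|G|^{n-1}$. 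The only step you state somewhat tersely is the identity $\nu(\vph,x)=\tfrac{1}{|G|}\nu(\pi^\ast\vph,\log\pi^\ast\psi_x)$, which the paper justifies by approximating $\vph$ by smooth psh functions and noting that the resulting smooth measures put no mass on the branch locus; this is worth spelling out, but it is not a gap.
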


Before providing a proof, we first describe the expressions of the slope and the Demailly--Lelong numbers on the quotient chart. 
As we mentioned in Remark~\ref{rmk:embedding_ref_potential}, to compute the slope and the Demailly--Lelong number, one can simply choose the restriction of $\frac{1}{2} \log(\sum_{i=1}^N |z_i|^2)$ to $X$ as a reference potential. 
We also have $\pi^\ast {z_i}_{|X} = f_i$ from the previous description and identification of $X$ and $V$. 
Set $\Psi = (\sum_{i=1}^N |f_i|^2)^{1/2}$. 
Now, the slope can be expressed as
\[
    s(\vph,x) = \sup \set{\gm > 0}{\pi^\ast \vph \leq \gm \log \Psi + O(1) \text{ near } 0}.
\]
On the other hand, the Demailly--Lelong number has the following formulation
\begin{equation}\label{eq:DL_quotient}
    \nu(\vph,x) = \frac{1}{\deg(\pi)} \int_{\{0\}} \ddc \pi^\ast \vph \w (\ddc \log \Psi)^{n-1}
    = \frac{1}{\deg(\pi)} \lim_{r \to 0} \int_{\BB_r(0)} (\ddc \pi^\ast \vph) \w \lt(\ddc \log \Psi \rt)^{n-1}
\end{equation}
where $\deg(\pi)$ is the degree of $\pi$ and it equals the order of $G$.
Indeed, using the same idea as in the proof of Theorem~\ref{bigthm:intersection_and_comparison}, 
\begin{align*}
    \nu(\vph,x)
    = \lim_{r\to 0} \lim_{j \to + \infty} \lim_{\vep_1 \to 0} \cdots \lim_{\vep_{n-1} \to 0} \underbrace{\int_{\{\Psi < r\}} \ddc \vph_j \w \bigwedge_{i=1}^{n-1} \ddc \log (\Psi + \vep_i)}_{=: L_{r,j,\vep_1,\cdots,\vep_{n-1}}},
\end{align*}
where $\vph_j$ is a sequence of smooth psh functions defined near $x$ and decreasing towards $\vph$.
Since the measure $\ddc \vph_j \w \bigwedge_{i=1}^{n-1} \ddc \log (\Psi + \vep_i)$ is smooth, it puts zero mass on the singular set.
Hence,
\begin{align*}
    L_{r,j,\vep_1,\cdots,\vep_{n-1}} 
    &= \frac{1}{\deg(\pi)} \int_{\{\Psi < r\} \setminus \pi^{-1}(X^\sing)} \ddc \pi^\ast \vph_j \w \bigwedge_{i=1}^{n-1} \ddc \pi^\ast \log (\Psi + \vep_i)\\
    &= \frac{1}{\deg(\pi)} \int_{\{\Psi < r\}} \ddc \pi^\ast \vph_j \w \bigwedge_{i=1}^{n-1} \ddc \pi^\ast \log (\Psi + \vep_i).
\end{align*}
Then, taking limits inductively, one obtains \eqref{eq:DL_quotient}.

\begin{proof}[Proof of Proposition~\ref{bigprop:quotient}]
Since the action of $G$ preserves the degree, the generators can be chosen to be homogeneous. 
In \cite{Noether_1915} (see also \cite{Fleischmann_2000, Fogarty_2001}), Noether proved that generators can be taken to have degree not exceeding $|G|$, the order of $G$. 
Therefore, we claim that 
\begin{equation}\label{eq:quotient_ref_potentials}
    |G| \log |w| \leq \log \Psi + O(1)
\end{equation}
near $0$, where $w$ is the coordinate on $\BC^n$.
After a dilation, one can assume $(f_1 = \cdots = f_N = 0) \cap \BB_2(0) = \{0\}$.
We now check that our claim \eqref{eq:quotient_ref_potentials}. 
Suppose otherwise, if there is a sequence of points $w_l \to 0$ as $l \to +\infty$ and $|w_l|^{2|G|} > l \sum_i |f_i(w_l)|^2$, then $1/l \geq \sum_i r_l^{2 (\deg f_i - |G|)} |f_i(\ta_l)|^2 \geq \sum_i |f_i(\ta_l)|^2$ where $w_l = r_l \ta_l$ with $r_l = |w_k|$ and $\ta_l \in \BS^{2n-1}$.
By the compactness of $\BS^{2n-1}$, $\ta_l \to \ta \in \BS^{2n-1}$ after extracting a subsequence, and thus, $\sum_i|f_i(\ta_l)|^2 \to \sum_i |f_i(\ta)|^2 = 0$ yields a contradiction; hence the claim \eqref{eq:quotient_ref_potentials} holds.
Using Demailly's comparison \cite[Thm.~4]{Demailly_1982} and \eqref{eq:quotient_ref_potentials}, we have the following
\begin{align*}
    \nu(\vph,x) 
    &= \frac{1}{\deg(\pi)} \int_{\{0\}} (\ddc \pi^\ast \vph) \w (\ddc \log \Psi)^{n-1}\\ 
    &\leq \frac{|G|^{n-1}}{\deg(\pi)} \int_{\{0\}} (\ddc \pi^\ast \vph) \w (\ddc \log |w|)^{n-1} \\
    &= |G|^{n-2} \nu(\pi^\ast \vph, 0) 
    = |G|^{n-2} s(\pi^\ast \vph, 0)
    \leq |G|^{n-1} s(\vph,x).
\end{align*}
This finishes the proof. 
\end{proof}

\begin{eg}
Consider $X$ a product of two-dimensional ADE-singularities, i.e. $X = \prod_{i=1}^m \BC^2/G_i$ where $G_i$ is a finite subgroup of $\RSL(2,\BC)$. 
For each point $x \in X$, locally $(X,x)$ is isomorphic to $(\BC^{2m}/G_x,0)$ for a finite subgroup $G_x \in \RSL(2m,\BC)$ with $|G_x| \leq \prod_{i=1}^m |G_i|$.
As a direct consequence of Proposition~\ref{bigprop:quotient}, for every $x \in X$ and for any psh germ $\vph: (X,x) \to \BR \cup \{-\infty\}$, we obtain 
\[
    \nu(\vph,x) \leq \lt(\prod_{i=1}^m |G_i|\rt)^{2m-1} s(\vph,x).
\]
We remark that in the above case, the singular locus $X^\sing$ is not isolated.
\end{eg}

We now provide examples showing that the estimate \eqref{eq:A_k_est} is sharp:

\begin{eg}\label{eg:A_k_sharp}
Up to a change of coordinate, $\RA_k$ is isomorphic to $(xy - z^{k+1} = 0) \subset \BC^3$. 
Note that $\RA_k \simeq \BC^2/ C_{k+1}$ where $C_{k+1}$ is the cyclic group generated by 
\[
\begin{bmatrix}
    e^{\frac{2\pi\ii}{k+1}} & 0 \\
    0 & e^{\frac{-2\pi\ii}{k+1}}
\end{bmatrix}
\] 
and the map $\pi$ from the quotient chart $\BC^2$ to $\RA_k \subset \BC^3$ is $\pi(u,v) = (u^{k+1}, v^{k+1}, uv)$. 
Hence, $(k+1) \log \sqrt{|u|^2 + |v|^2} \leq \log \psi_x(\pi(u,v))$ near $0$.

\smallskip 
Consider a psh function $\vph$ on $\RA_k$ given by the restriction of the psh function $\log (|x| + |z|^{k+1})$ on $\BC^3$. 
Set $x_0$ be the singular point. 
Then 
\begin{align*}
    \nu(\vph, x_0) &= \frac{1}{k+1} \lim_{r \to 0} \int_{\BB_r(0)} \ddc \log (|u^{k+1}| + |uv|^{k+1}) \w \frac{1}{2} \ddc \log (|u|^{2k+2} + |v|^{2k+2} + |uv|^2) \\
    &= \frac{1}{2} \underbrace{\lim_{r \to 0} \int_{\BB_r(0)} \ddc \log |u| \w \ddc \log (|u|^{2k+2} + |v|^{2k+2} + |uv|^2)}_{=: I} \\
    &\qquad + \frac{1}{2} \lim_{r \to 0} \int_{\BB_r(0)} \ddc \log(1 + |v|^{k+1}) \w \ddc \log (|u|^{2k+2} + |v|^{2k+2} + |uv|^2). 
\end{align*}
The second term equals zero. 
Hence, we only need to deal with the first term $I$.
We have
\[
    I = \lim_{r \to 0} \int_{\BB_r(0)} [(u=0)] \w \ddc \log( |u|^{2k+2} + |v|^{2k+2} + |uv|^2)
    = \lim_{r \to 0} \int_{|v|<r} \ddc \log |v|^{2k+2} = 2k+2
\]
and thus,  
\begin{equation}\label{eq:nu_shape}
    \nu(\vph, x_0) = k+1. 
\end{equation}
On the other hand, 
\[
    s(\vph, x_0) = \liminf_{\RA_k \setminus \{0\} \ni p = (x,y,z) \to 0} \frac{\log (|x| + |z|^{k+1})}{\frac{1}{2} \log (|x^2| + |y|^2 + |z|^2)}.
\]
Considering sequence of points $p_n = (1/n, 0, 0) \in \RA_k \setminus \{0\}$, we then obtain
\[
    s(\vph, x_0) \leq \lim_{n\to + \infty} \frac{\log(1/n)}{\frac{1}{2}\log (1/n^2)} = 1.
\]
Combining with \eqref{eq:A_k_est} and \eqref{eq:nu_shape}, we obtain $s(\vph, x_0) = 1$, and this ensures that the estimate in \eqref{eq:A_k_est} is sharp.
\end{eg}

\subsection{Cone singularities}
We now turn our attention to an isolated singularity that can be resolved through a single blowup with an irreducible exceptional locus.

\begin{cor}\label{cor:one_blowup_one_comp}
Let $x$ be an isolated singularity in $X$. 
Suppose that there exists a resolution of singularity $\pi: \wX \to X$ by a single blowup with $x$, and the exceptional divisor $E$ over $x$ is irreducible.  
Then for any psh germ $\vph: (X,x) \to \BR \cup \{-\infty\}$, 
\[
    \mult(X,x) \cdot s(\vph,x) = \nu(\vph,x).
\]
\end{cor}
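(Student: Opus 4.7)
The plan is to recognize the corollary as the degenerate single-component case of Theorem~\ref{bigthm:intersection_and_comparison}: under the hypothesis, the index set of irreducible exceptional components contains exactly one element. In this case the Izumi-type inequality of Lemma~\ref{lem:BBEGZ_A4} becomes a trivial equality (with constant $C = 1$), and the string of estimates leading to Theorem~\ref{bigthm:intersection_and_comparison} collapses to an equality throughout.

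Concretely, I would write $\pi^{-1}\fm_{X,x} = \CO_{\wX}(-mE)$ for some integer $m \geq 1$, decompose $\ddc \pi^\ast \vph = a[E] + R$ with $\nu(R,E) = 0$, and let $\ta$ be the smooth semi-positive $(1,1)$-form in $\ddc \pi^\ast \log \psi_x = m[E] + \ta$. Reading off the three formulas derived inside the proof of Theorem~\ref{bigthm:intersection_and_comparison} in the case $|I| = 1$ then gives
\[
    \nu(\vph,x) = a \cdot e, \qquad s(\vph,x) = \frac{a}{m}, \qquad \mult(X,x) = m \cdot e,
\]
where $e := \int_E \ta_{|E}^{n-1} \geq 0$. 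Here the first equality is the intersection expression established in Theorem~\ref{bigthm:intersection_and_comparison}, the second is the slope formula quoted from \cite[p.~73]{BBEGZ_2019}, and the third is Thie's theorem applied to $\log \psi_x$.

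Multiplying the latter two and comparing with the first immediately yields
\[
    \mult(X,x) \cdot s(\vph,x) = m \cdot e \cdot \frac{a}{m} = a \cdot e = \nu(\vph,x),
\]
which is the desired equality. I do not expect any genuine obstacle here: the only point worth verifying is that the single blowup considered is indeed a log resolution of $\fm_x$ in the strict sense required by Theorem~\ref{bigthm:intersection_and_comparison}, i.e.\ that $\wX$ is smooth and that the irreducible $E$ is a smooth divisor, so that $\supp(E)$ is automatically SNC. Once that is granted, the conclusion is a one-line deduction from the general formulas already established in Section~\ref{sec:slope_vs_DL}.
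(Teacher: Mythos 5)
Your proposal is correct and follows essentially the same route as the paper: specialize Theorem~\ref{bigthm:intersection_and_comparison} to the single-component case, writing $\pi^{-1}\fm_{X,x} = \CO_{\wX}(-mE)$ and $\ddc\pi^\ast\vph = a[E]+R$, so that $\nu(\vph,x) = D\cdot(-mE)^{n-1} = \frac{a}{m}(mE)\cdot(-mE)^{n-1}$ while Thie's theorem gives $\mult(X,x) = (mE)\cdot(-mE)^{n-1}$. Your observation that Lemma~\ref{lem:BBEGZ_A4} degenerates to $C=1$ is a correct (if implicit in the paper) explanation of why the inequality becomes an equality.
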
  

\begin{proof}
According to Siu's decomposition, $\ddc \pi^\ast \vph = a [E] + R$ for some $a \geq 0$ and positive closed $(1,1)$-current $R$ with generic Lelong number zero along $E$. 
Set $m \in \BN^\ast$ such that $\pi^{-1} \fm_{X,x} = \CO_\wX(-mE)$. 
By Theorem~\ref{bigthm:intersection_and_comparison}, we obtain 
\[
    \nu(\vph,x) = D \cdot (-mE)^{n-1} = \frac{a}{m} \cdot (mE) \cdot (-mE)^{n-1} = s(\vph,x) \cdot (mE) \cdot (-mE)^{n-1}.  
\]
Moreover, we also have $\mult(X,x) = \nu(\log \psi_x, x) = (mE) \cdot (-mE)^{n-1}$ and this completes the proof. 
\end{proof}

\begin{eg}
Consider $D = (f = 0) \subset \BP^n$ a smooth irreducible hypersurface of degree $d$ and let $X = \CC(D) := (f = 0) \subset \BC^{n+1}$ be the cone over $D$. 
The vertex $0$ is the only singular point of $X$. 
Note that $X$ can be desingularized by taking a single blowup of the origin $\pi: Y \to X$ with $\pi^{-1}\fm_0 = \CO_Y(-E)$ and $\mult(X,0) = d$. 
From Corollary~\ref{cor:one_blowup_one_comp}, for any psh germ $\vph: (X,0) \to \BR \cup \{-\infty\}$, 
\[
    \mult(X,0) \cdot s(\vph,0) = d \cdot s(\vph,0) = \nu(\vph,0).
\]
\end{eg}

\subsection{Questions}
It is natural to wonder whether a uniform version of the comparison still holds, namely, a comparison independent of both the base point $x$ and the psh germ $\vph$ at $x$.
Precisely, we address the following question:
\begin{ques}\label{ques:general_comparison}
Let $X$ be an $n$-dimensional
locally irreducible reduced
complex analytic space. 
Fix a compact subset $K \subset X$. 
Does there exist a uniform constant $C_K>0$ such that for all $x \in K$, for all psh germs $\vph: (X,x) \to \BR \cup \{-\infty\}$, 
\[
    \nu(\vph,x) \leq C_K \cdot \mult(X,x) \cdot s(\vph,x) ?
\]
In addition, can one find a bounded function $f(x)$ expressed by a certain algebraic invariant of $(X,x)$ such that $\nu(\vph,x) \leq f(x) \cdot \mult(X,x) \cdot s(\vph,x)$?
\end{ques}

It is also interesting to ask if the supremum of the ratio between the Demailly--Lelong number and the slope can be achieved by some psh functions with analytic singularities: 

\begin{ques}\label{ques:max_ratio}
Does there exists a psh germ $\psi: (X,x) \to \BR \cup \{-\infty\}$ with an isolated analytic singularities and non-zero slope at $x$ such that
\[
    \frac{\nu(\psi,x)}{s(\psi,x)} 
    = \sup\set{\frac{\nu(\vph,x)}{s(\vph,x)}}{\text{for all psh germs } \vph: (X,x) \to \BR \cup \{-\infty\} \text{ with }s(\vph,x) \neq 0}?
\]
\end{ques}

Finally, we make a remark on the connection with algebraic quantities. 

\begin{rmk}
We recall the notion of mixed multiplicities (cf. \cite[\S~2]{Teissier_1973}, \cite[Sec.~1.6.B]{Lazarsfeld_book_1}, \cite[Sec.~4.4]{Boucksom_Favre_Jonsson_2014}). 
The (Hilbert--Samuel) multiplicity of an $\fm_{X,x}$-primary ideal $\fa \subset \CO_{X,x}$ is defined as the limit
\[
    \e(\fa) = \lim_{k \to +\infty} \frac{n!}{k^n} \length(\CO_{X,x}/\fa^k),
\]where $\length(\CO_{X,x}/\fa^k)$ is the length of the Artinian ring $\CO_{X,x}/\fa^k$.
The mixed (Hilbert--Samuel) multiplicities of any two $\fm_{X,x}$-primary ideals $\fa_1$, $\fa_2$ are a sequence of $(n+1)$ integers $(\e(\fa_1^{[i]}; \fa_2^{[n-i]}))_{i = 0, \cdots, n}$ such that for all $p,q \in \BZ_+$
\[
    \e(\fa_1^p \cdot \fa_2^q)
    = \sum_{i=0}^n \binom{n}{i} \e(\fa_1^{[i]}; \fa_2^{[n-i]}) p^{i} q^{n-i}.
\]

\smallskip
Fix an $\fm_{X,x}$-primary ideal $\CI \subset \CO_{X,x}$ and take $\vph_\CI = \frac{1}{2} \log \sum_i |g_i|^2$ where $(g_i)_i$ are local generators of $\CI$. 
Then by Theorem~\ref{bigthm:intersection_and_comparison} and \cite[p.~92]{Lazarsfeld_book_1} (cf. \cite[Rmk.~A.5]{BBEGZ_2019} and \cite[Lem.~1.2]{Demailly_2009}), we have 
\[
    \nu(\vph_\CI,x) = \e_1(\CI,x) 
    := \e(\CI^{[1]};\fm_{X,x}^{[n-1]}).
\]
On the other hand, the slope can be expressed as the order of vanishing; namely, 
\[
    s(\vph_\CI,x)
    = \overline{\ord}_x(\CI) := \lim_{k \to +\infty}\frac{1}{k} \ord_x(\CI^k)
\]
with $\ord_x(\CI^k) := \set{l \in \BN}{\CI^k \subset \fm_{X,x}^l}$.
Indeed, taking $\pi: \wX \to X$ such that $\pi^{-1}\CI = \CO_\wX(-\sum_i a_i E_i)$ and $\pi^{-1}\fm_{X,x} = \CO_\wX(-\sum_i m_i E_i)$, by definition, we have 
\begin{align*}
    \ord_x(\CI^k) 
    &= \max\set{l \in \BN}{\CO_\wX(-k \sum_i a_i E_i) \subset \CO_\wX(-l \sum_i m_i E_i)} \\
    &= \max\set{l \in \BN}{k a_i \geq l m_i\ \, \text{for all $i$}}.
\end{align*}
Hence, we obtain $\ord_x(\CI^k)/k \leq \min_i (a_i/m_i) = s(\vph_\CI,x) \leq (\ord_x(\CI^k) + 1)/k$.

\smallskip
If Question~\ref{ques:max_ratio} holds, then finding the supremum of the ratio $\nu/s$ becomes looking for the following more algebro-geometric invariant
\[
    \sup\set{\frac{\e_1(\CI,x)}{\overline{\ord}_x(\CI)}}{\text{for all $\fm_{X,x}$-primary ideal } \CI \subset \CO_{X,x}}.
\]
\end{rmk}

\bibliographystyle{smfalpha_new}
\bibliography{biblio}

\end{document}